\newcommand{\dd}{\, \mathrm{d}}
\newcommand{\ddiv}{\mathrm{div}\,}
\newcommand{\T}{\mathbb{T}^d}
\newcommand{\D}{\mathbb{D}}
\newtheorem{prop}{Proposition}
\newtheorem{lem}{Lemma}
\newtheorem{thm}{Theorem}
\newtheorem{rem}{Remark}
\numberwithin{equation}{section}
\numberwithin{thm}{section}
\numberwithin{lem}{section}
\numberwithin{prop}{section}
\title{Weak solutions for the Stokes system for compressible non-Newtonian fluids with unbounded divergence}
\author{Milan Pokorn\'y\footnote{Charles University, Faculty of Mathematics and Physics, Sokolovsk\'a 83, 186 75 Praha 8, Czech Republic, e-mail address: {\tt pokorny@karlin.mff.cuni.cz}.} \ \& Maja Szlenk\footnote{University of Warsaw, Faculty of Mathematics, Informatics and Mechanics, Banacha 2, 02-097 Warsaw, Poland, e-mail address: {\tt m.szlenk@uw.edu.pl}}}
\begin{document}

\maketitle

\begin{abstract}
    We investigate the existence of weak solutions to a certain system of partial differential equations, modelling the behaviour of a compressible non-Newtonian fluid for small Reynolds number. We construct the weak solutions despite the lack of the $L^\infty$ estimate on the divergence of the velocity field. The result was obtained by combining the regularity theory for singular operators with a certain logarithmic integral inequality for $BMO$ functions, which allowed us to adjust the method from \cite{feireisl_global_2015} to more relaxed conditions on the velocity.
\end{abstract}
\textbf{Keywords:} compressible Stokes system, weak solutions, non-Newtonian fluids, power law fluids

\section{Introduction}

Our aim is to investigate the existence of weak solutions to equations, modelling a special case of compressible, non-Newtonian fluid. In the most general setting, the motion of  such a fluid without the presence of the external forces is described by the system of partial differential equations
\begin{equation}\label{ns}
\begin{aligned}
\varrho_t + \ddiv(\varrho u) = 0, \\
(\varrho u)_t +\ddiv(\varrho u\otimes u) -\ddiv\mathbb{S} =0,
\end{aligned} \end{equation}
where $\varrho$ is the density, $u$ is a velocity vector and $\mathbb{S}$ is the stress tensor; we assume that it is given by 
\[ \mathbb{S}(\mathbb{D}u,\varrho) = \mu\mathbb{D}u + \big(\lambda\ddiv u-p(\varrho)\big)\mathbb{I}, \]
where $\mu >0$ and $\lambda$ are the viscosity coefficients, $\mathbb{D}=\frac{1}{2}(\nabla+ \nabla^T)$ is the symmetric gradient and $p(\varrho)$ is the pressure. In the case of constant viscosity (i.e., the resulting system is called the compressible Navier--Stokes equations) $d\lambda + \mu\geq 0$, where $d$ is the space dimension.

We will focus on the case where the Reynolds number $\mathrm{Re}\sim \frac{\varrho |u|}{\mu}$ is small. As in this situation the advective forces are small compared to the viscous ones, we can approximate system (\ref{ns}) by the compressible Stokes-like system
\begin{equation}\label{stokes}
    \begin{aligned}
    \varrho_t + \ddiv(\varrho u) = 0, \\
    -\ddiv\mathbb{S} = 0.
    \end{aligned}
\end{equation}

Our aim is to obtain weak solutions to a special case of system (\ref{stokes}). We assume that the shear viscosity $\mu$ is in the form
\begin{equation}\label{mu} \mu=\mu_0(|\D u|)+2\mu_1, \quad \mu_1>0 \;\; \text{constant} \end{equation}
and the bulk viscosity $\lambda=\lambda(|\ddiv u|)$, where
\begin{equation}\label{viscosity}
    0\leq \mu_0(z),\lambda(z) \leq \frac{C}{z}, \quad z>0. 
\end{equation} 
Furthermore, the functions $\mathbb{R}^{d\times d}\ni B\mapsto\mu_0(|B|)B$ and $\mathbb{R}\ni s\mapsto \lambda(|s|)s$ are assumed to be monotone.

For the pressure we assume the barotropic case with $p(\varrho)=\varrho^\gamma$ for $\gamma\geq 1$. (Indeed, we could replace this precise form just by asymptotic growth conditions, similarly as in \cite{feireisl_global_2015}, i.e. $p(0) =0$, $p'(z) >0$ for $z >0$ and $\lim_{z\to \infty} \frac{p(z)}{z^\gamma} \in (0,\infty)$ for some $\gamma \geq 1$, but we skip such unnecessary complications). For simplicity we consider the space-periodic boundary conditions, hence $u\colon[0,T]\times\mathbb{T}^d\to\mathbb{R}^d$ and $\varrho\colon[0,T]\times\mathbb{T}^d\to\mathbb{R}$, where $\mathbb{T}^d$ is the $d$-dimensional torus. In conclusion, the analysed system of equations yield

\begin{equation}\label{main}
\begin{aligned} \varrho_t + \ddiv(\varrho u) &=0, \\
-\ddiv(\mu_0(|\D u|)\D u) - \mu_1\Delta u -\nabla((\mu_1+\lambda(\ddiv u))\ddiv u) + \nabla \varrho^\gamma &=0,
\end{aligned}
\end{equation}
with the initial condition
\begin{equation}
\varrho_{|_{t=0}} =\varrho_0\in L^\infty(\mathbb{T}^d) 
\end{equation}
and
\[ \int_{\T} u(t,x)\dd x = 0 \quad \forall_{t>0}. \]

Our system describes a fluid belonging to a class of power-law fluids. They are characterized by the behavior of the shear viscosity, which satisfies the relation
\begin{equation}\label{power-law} \mu\sim |\mathbb{D}u|^{r-2} \end{equation}
for some exponent $r\geq 1$. Typically, it is assumed that $\mu=\mu_0|\mathbb{D}u|^{r-2}$ or $\mu=\mu_0(a+|\mathbb{D}u|)^{r-2}$, $a>0$, to ensure that the viscosity is strictly positive and does not have singularities. For $r=2$, the fluid becomes Newtonian, whereas it is shear-thinning for $r<2$ and shear-thickening for $r>2$. The power-law fluids are used in many fields, for example glaciology \cite{man_significance_1987,kjartanson} and to analyze the dynamics in the Earth's Mantle \cite{yuen_strongly_1992} or blood flow \cite{cho_effects_1989,steffan_comparison_1990}. For more information we refer the reader, e.g., to \cite{Blechta_Malek_Raj}. Our situation corresponds specifically to a Herschel-Bulkley fluid, where the shear viscosity is in the form
\[\mu = \left\{ \begin{aligned}
 \mu_0, &\quad |\D u|< \delta, \\
 \frac{\tau_0}{|\D u|} + k|\D u|^{n-1}, &\quad |\D u|\geq \delta
\end{aligned}\right. \]
for some $n\geq 1$ and the parameters $\mu_0, \tau_0, k$ are chosen in such way that $\mu$ remains continuous. Fluids of this type were thoroughly analysed in the incompressible case, and have many industrial applications, see e.g.  \cite{H-B1,H-B2,H-B3}.

The mathematical theory concerning weak solutions to systems describing incompressible non-Newtonian fluids have been thoroughly developed in the past. There is a large number of papers dealing with several aspects of these problems. As it turns out, the existence and regularity of solutions to incompressible Navier--Stokes equations with the power-law relation (\ref{power-law}) for viscosity depends on the value of $r$. For $r>\frac{2d}{d+2}$ the existence of weak solutions for the problem
\begin{equation}\label{incompressible}
    \begin{aligned}
     \ddiv u &=0, \\
     u_t+\ddiv(u\otimes u) - \ddiv\mathbb{S} &= 0
    \end{aligned}
\end{equation}
with Dirichlet boundary conditions was shown for the first time in \cite{Die_Ruz_Wolf}; its uniqueness is known for $r\geq \frac{3d+2}{d+2}$, see \cite{Bu_Ka_etal}. As a matter of fact, the problem for $r < \frac{2d}{d+2} $ is ill-posed, see \cite{Bu_Mo_Sz}. However, existence of more general, dissipative solutions can be shown also in this case, see \cite{Abba_Fei}. 

Contrary to the incompressible case, the literature on the compressible non-Newtonian fluids is very limited. In \cite{mamontov1,mamontov2} Mamontov proved the existence of weak solutions to the system with linear pressure term and in the framework of Orlicz spaces with exponential growth, see also \cite{And_Vod} for further properties of these solutions. The results for more general form of the pressure were obtained in \cite{feireisl_global_2015}, where the authors considered the system (\ref{ns}) with $\mu$ of the form (\ref{power-law}) and a special form of $\lambda$, which provided the $L^\infty$ bound on $\ddiv u$. Using the classical Lions \& Feireisl method \cite{feireisl,lions}, they proved the existence of weak solutions for the same range of $r$ as the uniqueness and regularity theory is developed for incompressible fluids ($r\geq \frac{11}{5}$ in three dimensions). The additional bound on the divergence was crucial to obtain the strong convergence of the density in the final limit passage.

\subsection{Main result and structure of the paper}
Since the definition of the weak solution for our system \eqref{stokes} is straightforward, we may directly formulate our main result.

\begin{thm}\label{main_th}
Let $\varrho_0\in L^\infty(\mathbb{T}^d)$, $\gamma \geq 1$ and let \eqref{viscosity} hold. Then for any $T>0$ there exists a weak solution to the system (\ref{main}), satisfying
\[ \|\nabla u\|_{L^2((0,T)\times\mathbb{T}^d)} + \|\ddiv u \|_{L^\infty(0,T;L^p)}+ \|\varrho\|_{L^\infty(0,T;L^p)}  \leq C(p,T), \]
for any $1\leq p<\infty$, where $C$  approaches $\infty$ if $p$ or $T$ does so. 
\end{thm}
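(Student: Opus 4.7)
The plan is to combine a Feireisl--Lions-type approximation scheme with an $L^p$ density estimate based on the $BMO$ regularity of an effective viscous flux; this $BMO$ bound is what must replace, in our setting, the $L^\infty$ bound on $\ddiv u$ exploited in \cite{feireisl_global_2015}. First I would construct approximate solutions by regularising the continuity equation parabolically, $\varrho_t + \ddiv(\varrho u) = \varepsilon \Delta\varrho$, and augmenting the momentum equation with an artificial higher-order viscous term $\delta(-\Delta)^k u$ (and, if required for the compactness, an auxiliary pressure $\delta\varrho^\beta$). For a fixed density, the resulting momentum equation is a monotone coercive elliptic problem: coercivity comes from $-\mu_1\Delta u$, and monotonicity from the assumption on $B\mapsto\mu_0(|B|)B$ and $s\mapsto\lambda(|s|)s$. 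Solvability then follows from Minty--Browder, and the full coupled system at the approximate level is handled by a time-discretisation/fixed-point argument.

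The central a priori estimate is obtained by taking the divergence of the momentum equation, which gives
\[ -\Delta\big(2\mu_1\,\ddiv u + \lambda(|\ddiv u|)\ddiv u - \varrho^\gamma\big) = \ddiv\ddiv\big(\mu_0(|\D u|)\D u\big). \]
By assumption \eqref{viscosity}, both $\mu_0(|\D u|)\D u$ and $\lambda(|\ddiv u|)\ddiv u$ are uniformly bounded in $L^\infty$, so Calder\'on--Zygmund theory (Riesz transforms are bounded $L^\infty\to BMO$) yields
\[ \|2\mu_1\,\ddiv u - \varrho^\gamma\|_{BMO(\T)} \leq C, \]
uniformly in the approximation parameters. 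This is the $BMO$ version of the effective viscous flux identity.

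I would then close an $L^p$ estimate on $\varrho$. Testing the continuity equation with $p\varrho^{p-1}$ gives
\[ \frac{\dd}{\dd t}\int_{\T}\varrho^p\dd x + \frac{p-1}{2\mu_1}\int_{\T}\varrho^{p+\gamma}\dd x = -\frac{p-1}{2\mu_1}\int_{\T}\varrho^p\big(2\mu_1\,\ddiv u - \varrho^\gamma\big)\dd x. \]
The right-hand side is estimated by the logarithmic $BMO$ inequality announced in the abstract, in a form like
\[ \Big|\int_{\T}\varrho^p f\,\dd x\Big| \leq C\|f\|_{BMO}\|\varrho\|_{L^p}^p\Big(1 + \log^+\frac{\|\varrho\|_{L^{p+\gamma}}}{\|\varrho\|_{L^p}}\Big), \]
after which Young's inequality absorbs the worst growth into the dissipative $L^{p+\gamma}$ term on the left. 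A logarithmic Gronwall argument then produces $\|\varrho\|_{L^\infty(0,T;L^p)}\leq C(p,T)$ for every $p<\infty$, with $C(p,T)\to\infty$ as $p\to\infty$ or $T\to\infty$. The bounds $\|\ddiv u\|_{L^\infty(0,T;L^p)}\leq C(p,T)$ and $\|\nabla u\|_{L^2((0,T)\times\T)}\leq C(T)$ then follow from the $BMO$ identity (recall $BMO(\T)\hookrightarrow L^q$ for all $q<\infty$) combined with the energy estimate obtained by testing the momentum equation with $u$.

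The step I expect to be the main obstacle is the limit passage, specifically the strong convergence of the approximate densities needed to identify the nonlinear pressure and to pass to the limit in the monotone stress via Minty. In the classical Feireisl framework this hinges on the oscillation defect measure controlling $\overline{\varrho^\gamma\ddiv u}-\overline{\varrho^\gamma}\,\overline{\ddiv u}$, which in \cite{feireisl_global_2015} was closed using $\ddiv u\in L^\infty$. Here this $L^\infty$ bound is unavailable, and the oscillation defect analysis has to be redone using only the $BMO$ bound on the effective viscous flux and the logarithmic $BMO$ inequality; showing that the defect remains subcritical, so that the renormalised continuity equation forces it to vanish, is the analytical heart of the proof.
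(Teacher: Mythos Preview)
Your overall architecture matches the paper's: a two-parameter approximation (parabolic regularisation of the continuity equation together with an artificial pressure $\delta\varrho^\beta$, plus a higher-order elliptic regulariser $\varepsilon\Delta^{2m}u$ in the momentum equation), solved by a Schauder fixed-point argument; the Calder\'on--Zygmund $BMO$ bound on the effective viscous flux $(2\mu_1+\lambda)\ddiv u-\varrho^\gamma$ is obtained exactly as you describe.

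One methodological difference is worth recording. For the uniform $L^p$ density bound you test the renormalised continuity equation with $p\varrho^{p-1}$, split off the dissipative term $\int\varrho^{p+\gamma}$, and close via the logarithmic $BMO$ inequality. The paper does \emph{not} use the logarithmic inequality at this stage; instead it tests the \emph{momentum} equation with $\psi=-\Delta^{-1}\nabla\big(T_k(\varrho)^p-\{T_k(\varrho)^p\}\big)$, which produces $\int\varrho^\gamma T_k(\varrho)^p$ directly on the left and needs only the $L^r$-boundedness of Riesz transforms. Your route is plausible, but note that the specific form you wrote (with $\|\varrho\|_{L^{p+\gamma}}$ inside the logarithm) does not follow from the paper's inequality \eqref{log_eq}, which requires $g\in L^q$ with $q>2$; you would have to justify that variant separately or bootstrap carefully with truncations.

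The step you flag as the analytical heart---strong convergence of $\varrho_\delta$---is carried out in the paper not through the Feireisl oscillation-defect-measure machinery but by a direct energy comparison. Passing to the limit in the weak momentum equation and testing with $u$ gives an identity; taking $\limsup$ of the approximate energy inequality and using monotonicity of $B\mapsto\mu_0(|B|)B$ and $s\mapsto\lambda(|s|)s$ gives an inequality. Subtracting, and using that the limit $\varrho$ satisfies the renormalised continuity equation, yields for $y(t)=\int(\overline{\varrho^\gamma}-\varrho^\gamma)\,\dd x\geq 0$
\[ y(t)\leq -(\gamma-1)\int_0^t\!\!\int(\overline{\varrho^\gamma}-\varrho^\gamma)\,\ddiv u\,\dd x\,\dd s. \]
Writing $\ddiv u=(\ddiv u+\overline{\lambda\ddiv u}-\overline{\varrho^\gamma})+\overline{\varrho^\gamma}-\overline{\lambda\ddiv u}$, the first bracket is bounded in $BMO$, the $\overline{\varrho^\gamma}$-term has a good sign, and $\overline{\lambda\ddiv u}\in L^\infty$. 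Applying \eqref{log_eq} with $g=\overline{\varrho^\gamma}-\varrho^\gamma$ (now known to lie in every $L^q$ thanks to the $L^p$ bound already obtained) gives
\[ y(t)\leq C\int_0^t y(s)\big(|\ln y(s)|+1\big)\,\dd s,\qquad y(0)=0, \]
and uniqueness for $z'=Cz(|\ln z|+1)$ forces $y\equiv 0$. Once $\overline{\varrho^\gamma}=\varrho^\gamma$, a second pass through the energy comparison gives $\nabla u_\delta\to\nabla u$ strongly in $L^2$, which identifies the remaining nonlinear stress terms directly, so no separate Minty argument is needed. This is the concrete mechanism you should spell out in place of the oscillation-defect heuristic.
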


The proof of the theorem uses the technique from \cite{feireisl_global_2015}. However, due to the absence of the convective term we are able to obtain the result for relaxed assumptions on $\ddiv u$. In particular, we do not have from the very beginning that $\ddiv u$ is bounded. Instead of it, we obtain the $BMO$ regularity in space of the term
\[ (\mu_1+\lambda)\ddiv u-\varrho^\gamma. \]
This allows us to replicate the main step in the limit passage by using certain integral inequality from \cite{szlenk_weak_2022}.

\subsection{Preliminaries}

In the paper, we use standard notation for the Lebesgue and Sobolev space as well as the corresponding norms. For the notational simplicity, we omit the subscript while integrating over the torus, namely
\[ \int \dd x \ := \ \int_{\mathbb{T}^d} \dd x. \]
By $\{\cdot\}_Q$ we denote the mean integral over $Q$, while in the case of the whole torus we again omit the subscript. As the results do not depend on the values of $\mu_1$, for simplicity we set $\mu_1=1$.

A few other, slightly nonstandard spaces are defined below. 

\paragraph{Functions of bounded mean oscillation.} A function $f\in L^1(\Omega)$ belongs to space of bounded mean oscillation $BMO(\Omega)$ iff
 \[ \|f\|_{BMO} = \sup_{Q\subset\Omega}\frac{1}{|Q|}\int_Q |f-\{f\}_Q|\dd x < \infty, \]
 where the supremum is taken over all cubes in $\Omega$.

Note that $\|\cdot\|_{BMO}$ is not a norm, as $\|f\|_{BMO}=0$ for $f$ constant. However, we can endow the space $BMO(\Omega)$ with the norm
\[ \|\cdot\|_{L^1} + \|\cdot\|_{BMO} \]
and then it becomes the Banach space. The important result concerning $BMO$ functions in terms of our work is the following inequality: 
\begin{equation}\label{log_eq}
\begin{aligned} \left| \int_{\mathbb{T}^d}fg\dd x\right| \leq & C\|f\|_{BMO}\|g\|_{L^1} \\
& \times \big(|\ln\|g\|_{L^1}| + \ln(e+\|g\|_{L^q}) + (1+|\ln\|g\|_{L^1}|)\|g\|_{L^q}^{\frac{q-2}{2}}\big) \end{aligned} \end{equation}
for $g\in L^q$, $q>2$. The inequality was first obtained for $g\in L^\infty$ in \cite{mucha-rusin} and then adjusted in \cite{szlenk_weak_2022} for a wider range of functions. In \cite{de_rosa-inversi-stefani} the similar inequality was obtained for $f$ in the Orlicz space $L^{\exp}$.

The rest of the article is devoted to prove Theorem \ref{main_th}. First, in Section \ref{a_priori} we derive the a priori estimates, in particular the crucial $BMO$ estimate for the quantity $(2+\lambda(\ddiv u))\ddiv u-\varrho^\gamma$. Then, in Section \ref{appr_existence} we prove the existence of solutions to an approximated system with the regularized continuity and momentum equations. In Section \ref{compactness} we finish the proof by passing to the limit in the weak formulation of the approximate system and in consequence we obtain the weak solution of the original one. 

\section{A priori estimates}\label{a_priori}
\begin{lem}
Under the assumptions of Theorem \ref{main}, if the solution to (\ref{main}) is sufficiently smooth, it satisfies
\[ \|\nabla u\|_{L^2((0,T)\times\mathbb{T}^d)} + \|\varrho\|_{L^\infty(0,T;L^\gamma)} \leq C \]
and 
\[ \|(2+\lambda(\ddiv u))\ddiv u-\varrho^\gamma\|_{L^\infty(0,T;BMO)} \leq C \]
for $C$ depending only on $\|\varrho_0\|_\infty$. Furthermore,
\[ \|\ddiv u\|_{L^\infty(0,T;L^p)} + \|\varrho\|_{L^\infty(0,T;L^p)} \leq C(p,T) \]
for any $p<\infty$, where $C(p)\to \infty$ as $p\to\infty$ or $T\to \infty$.
\end{lem}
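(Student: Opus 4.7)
The plan is to split the proof into three parts: (i) a standard energy identity; (ii) a $BMO$ bound for the effective viscous flux via Calder\'on--Zygmund regularity; (iii) a bootstrap to $L^p$ estimates based on the renormalized continuity equation and the logarithmic inequality \eqref{log_eq}.

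For (i), I test the momentum equation by $u$, integrate by parts over the torus, and obtain
\[
\int \mu_0(|\D u|)|\D u|^2\dd x + \int|\nabla u|^2\dd x + \int(1+\lambda(\ddiv u))(\ddiv u)^2\dd x = \int \varrho^\gamma\ddiv u\dd x.
\]
Multiplying the continuity equation by $\varrho^{\gamma-1}$ yields $\tfrac{d}{dt}\int\varrho^\gamma\dd x+(\gamma-1)\int\varrho^\gamma\ddiv u\dd x=0$ (plain mass conservation when $\gamma=1$). Combining the two identities and integrating in time delivers the stated bounds on $\|\varrho\|_{L^\infty(0,T;L^\gamma)}$ and $\|\nabla u\|_{L^2((0,T)\times\T)}$. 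For (ii), I take the divergence of the momentum equation; using $\ddiv\Delta u=\Delta\ddiv u$ and collecting terms gives
\[
\Delta\bigl((2+\lambda(\ddiv u))\ddiv u-\varrho^\gamma\bigr)=-\ddiv\ddiv\bigl(\mu_0(|\D u|)\D u\bigr).
\]
Condition \eqref{viscosity} makes $\mu_0(|\D u|)\D u$ uniformly bounded pointwise, so after inverting the Laplacian (the periodic mean being controlled by step (i)) the quantity $F:=(2+\lambda(\ddiv u))\ddiv u-\varrho^\gamma$ is, up to an additive constant, the image of an $L^\infty$ tensor under a composition of two Riesz transforms. Since such a Calder\'on--Zygmund operator of order zero maps $L^\infty$ into $BMO$, the required $L^\infty(0,T;BMO)$ bound on $F$ follows.

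The delicate part is (iii). The renormalized continuity identity reads
\[
\frac{d}{dt}\int\varrho^p\dd x+(p-1)\int\varrho^p\ddiv u\dd x=0.
\]
Because $\lambda(z)z$ is bounded by \eqref{viscosity}, I may write $2\ddiv u=F+\varrho^\gamma+h$ with $\|h\|_{L^\infty}\le C$, so the identity becomes
\[
\frac{d}{dt}\int\varrho^p\dd x+\frac{p-1}{2}\int\varrho^{p+\gamma}\dd x\le \frac{p-1}{2}\Bigl|\int F\varrho^p\dd x\Bigr|+C\int\varrho^p\dd x.
\]
I then apply \eqref{log_eq} with $f=F$ and $g=\varrho^p$, choosing $q$ slightly larger than $2$ and interpolating $\|\varrho^p\|_{L^q}$ between $\|\varrho^p\|_{L^1}$ and $\|\varrho^p\|_{L^{(p+\gamma)/p}}$. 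Because the exponent $(q-2)/2$ appearing in \eqref{log_eq} is small, Young's inequality absorbs the $L^{p+\gamma}$ contribution into the dissipation $\int\varrho^{p+\gamma}\dd x$, leaving a logarithmic Gronwall inequality of the shape $y'(t)\le C_p\bigl(1+y(t)\bigr)\ln\bigl(e+y(t)\bigr)$ with $y(t)=\int\varrho^p(t,\cdot)\dd x$. Integrating it produces the bound $C(p,T)$, blowing up as $T\to\infty$ or $p\to\infty$. The corresponding $L^p$ bound on $\ddiv u$ then follows from $2\ddiv u=F+\varrho^\gamma+h$ together with the John--Nirenberg embedding $BMO(\T)\hookrightarrow L^p(\T)$.

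The main obstacle is this last step: the coupling between $F$ and $\varrho^p$ is critical, and only the refined inequality \eqref{log_eq} (as opposed to a plain $BMO$--$H^1$ duality) is strong enough to keep the dissipation intact after Young. Tracking how the Gronwall constant depends on $p$ is what produces the stated blow-up of $C(p,T)$.
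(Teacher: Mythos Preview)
Your steps (i) and (ii) are correct and coincide with the paper's argument. The difference lies in (iii), and there your route has a genuine gap.

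The problem is the interpolation you invoke. For the logarithmic inequality \eqref{log_eq} you need $g=\varrho^p\in L^q$ with $q>2$, and you propose to interpolate $\|\varrho^p\|_{L^q}$ between $\|\varrho^p\|_{L^1}$ and $\|\varrho^p\|_{L^{(p+\gamma)/p}}$. But $(p+\gamma)/p=1+\gamma/p$, which drops below $2$ as soon as $p>\gamma$; for such $p$ the target exponent $q>2$ lies \emph{outside} the interval $[1,(p+\gamma)/p]$ and no interpolation is available. Equivalently, the dissipation $\int\varrho^{p+\gamma}$ controls $\|\varrho\|_{L^{p+\gamma}}$, whereas the term $\|\varrho^p\|_{L^q}^{(q-2)/2}$ involves $\|\varrho\|_{L^{pq}}$ with $pq>2p>p+\gamma$; however small you make $(q-2)/2$, Young's inequality cannot absorb a norm of strictly higher order than the dissipation. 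The Gronwall loop therefore does not close for $p>\gamma$, and no bootstrap from small $p$ is apparent either, since each step would again require control of $\|\varrho\|_{L^{pq}}$ with $pq>2p$.

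The paper bypasses \eqref{log_eq} entirely in this lemma. It tests the momentum equation with the Bogovski\u{\i}-type field $\psi=-\Delta^{-1}\nabla\big(\varrho^{\theta}-\{\varrho^{\theta}\}\big)$, which produces $\int\varrho^{\gamma+\theta}$ directly on the good side, while on the right only $\|\nabla\psi\|_{L^{1+\delta}}\le C(\delta)\|\varrho^{\theta}\|_{L^{1+\delta}}$ appears, paired with the \emph{bounded} tensors $\mu_0(|\D u|)\D u$ and $\lambda(\ddiv u)\ddiv u$. Combining with the renormalized identity $\int\varrho^{\theta}\ddiv u=-\tfrac{1}{\theta-1}\tfrac{d}{dt}\int\varrho^{\theta}$ yields
\[
\frac{2}{\theta-1}\frac{d}{dt}\int\varrho^{\theta}\dd x+\int\varrho^{\gamma+\theta}\dd x
\le C\Big(\int\varrho^{\theta}\dd x+\Big(\int\varrho^{(1+\delta)\theta}\dd x\Big)^{1/(1+\delta)}\Big),
\]
and choosing $\delta<\gamma/\theta$ makes $(1+\delta)\theta<\gamma+\theta$, so the second term is absorbed by Young. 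The $BMO$ bound on $F$ is then used only at the very end, together with the just-obtained $\varrho\in L^\infty(0,T;L^p)$, to read off $\ddiv u\in L^\infty(0,T;L^p)$; the logarithmic inequality \eqref{log_eq} is reserved for the compactness argument in Section~\ref{compactness}, not for these a~priori bounds.
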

\begin{proof} Multiplying the second equation of (\ref{main}) by $u$ and integrating over the torus, we obtain (if $\gamma=1$, the last integral is replaced by $\int \varrho \ln\varrho \dd x$)
\[ \int \mu_0(|\D u|)|\D u|^2\dd x + \int |\nabla u|^2\dd x + \int (\ddiv u)^2\dd x +\int \lambda(\ddiv u)(\ddiv u)^2\dd x + \frac{1}{\gamma-1}\frac{\dd}{\dd t}\int \varrho^\gamma\dd x = 0. \]
Integrating the above equality from $0$ to $T$, we obtain the first desired estimate.

To obtain the $L^p$ estimate of the density, we use as test function in \eqref{stokes}$_2$ the function $\psi = -\Delta^{-1}\nabla \left(\varrho^{\theta}-\{\varrho^{\theta}\}\right)$ for some $\theta>1$. We have
\[ \ddiv\psi = \varrho^{\theta}-\{\varrho^{\theta}\}\]
and
\[ \|\nabla\psi\|_{L^r((0,T)\times\T)}\leq C(r)\|\varrho^{\gamma}\|_{L^r((0,T)\times\T)} \; \text{for any} \; 1<r<\infty. \]
Note that $C(r) \to \infty$ if $r\to 1^+$ or $r\to \infty$. Moreover,
\[ -\int \Delta u\cdot \psi \dd x = \int u\cdot \nabla \left(\varrho^{\theta}-\{\varrho^{\theta}\}\right) \dd x = -\int \varrho^\theta\ddiv u \dd x. \]

Then
\[
\begin{aligned}
\int \varrho^{\gamma +\theta} \dd x - 2 \int \varrho^{\theta}\ddiv u \dd x  &= -\int \mu_0(|\D u|)\D u:\nabla\psi\dd x  
- \int \varrho^\theta\lambda(\ddiv u)\ddiv u \dd x \\ & + \frac{1}{|\mathbb{T}^d|}\int \varrho^{\gamma}\dd x \int \varrho^\theta\dd x. 
\end{aligned}
\]
As  
\[\int \varrho^\theta\ddiv u \dd x\dd t = \frac{-1}{\theta-1}\frac{{\rm }d}{{\rm d}t}\int \varrho^\theta \dd x,\]
 we obtain
\[ 
 \frac{2}{\theta-1}\frac{{\rm }d}{{\rm d}t}\int \varrho^\theta \dd x + \int \varrho^{\gamma +\theta} \dd x \leq C \left(\int \varrho^\theta \dd x + \left(\int \varrho^{(1+\delta)\theta} \dd x\right)^{\frac 1{1+\delta}}\right);
 \]
 whence, for a suitably chosen $\delta$
 \[
 \|\varrho\|_{L^\infty(0,T;L^p} \leq C(p,T,\varrho_0).
 \]

The last estimate comes from the Calder\'on--Zygmund estimates. By taking the divergence of the momentum equation, we get
\[ \Delta((2+\lambda(\ddiv u))\ddiv u -\varrho^\gamma) = -\ddiv\ddiv\left(\mu_0(|\D u|)\D u\right). \]
Therefore in consequence
\[\begin{aligned} (2+\lambda(\ddiv u(t,x)))\ddiv u(t,x) -\varrho^\gamma(t,x) &= -\int \ddiv\ddiv\left(\mu_0(|\D u|)\D u\right)\bar K(x-y)\dd y \\
&= -\int \ddiv\left(\mu_0(|\D u|)\D u\right)\cdot \nabla\bar K(x-y)\dd y \\
&= -\int \mu_0(|\D u|)\D u:\nabla^2\bar K(x-y)\dd y, 
\end{aligned}\]
where $\Delta\bar K(x) =\delta_x$ in $\T$. Note that we can derive $\bar K$ by periodizing the fundamental solution to the Laplace equation on the whole space. If $K(x)=\frac{C_d}{|x|^{d-2}}$, then we simply put
\[ \bar K(x) = K(x) + \sum_{k\in \mathbb{Z}^d\setminus\{0\}} (K(x+k)-K(k)). \]

As $\nabla^2K$ is a singular kernel, so is $\nabla^2\bar K$. Therefore from the Calder\'on--Zygmund theorem we conclude that 
\[ \|(2+\lambda(\ddiv u))\ddiv u-\varrho^\gamma\|_{L^\infty(0,T;BMO)} \leq \left\|\mu_0(|\D u|)\D u\right\|_{L^\infty((0,T)\times\mathbb{T}^d)} \leq C \]
for some constant $C$ independent of $T$. Since $\varrho^\gamma$ is bounded in $L^\infty(0,T;L^p)$ and $\lambda(\ddiv u))\ddiv u$ in $L^\infty((0,T)\times \T)$, we finish the proof. 
\end{proof}

\section{Existence of approximate solutions}\label{appr_existence}
In this section we construct a unique solution to the system 
\begin{equation}\label{approx}
\begin{aligned}
 \varrho_t + \ddiv(\varrho u) +\delta\varrho^\beta &= \delta\Delta\varrho, \\
-\ddiv(\mu_0(\mathbb{D}u)\mathbb{D}u) - \Delta u - \nabla(1+\lambda(\ddiv u))\ddiv u + \nabla \varrho^\gamma &= -\varepsilon\Delta^{2m}u, \quad \int u(t,x)\dd x=0
\end{aligned}\end{equation}
for a sufficiently small $\delta,\varepsilon>0$,  sufficiently large $m\in \mathbb{N}$ and $\beta\geq \max\{\gamma+1,4\}$ being an even integer, 
 with the initial condition 
\[ \varrho_{|_{t=0}} = \varrho_{0,\delta}\in C^\infty(\T), \quad \varrho_{0,\delta}>0, \quad \varrho_{0,\delta} \to \varrho_0 \;\text{in any} \; L^p, \; p<\infty. \]

To prove the existence of solutions, we will employ the following version of the Schauder fixed point theorem:
\begin{thm}\label{Schauder}
Let $X$ be a Banach space and $\Phi\colon X\to X$ be continuous and compact. If the set 
\[ \{x\in X: \quad x=s\Phi(x) \quad \text{for some} \quad s\in [0,1]\} \]
is bounded, then $\Phi$ has a fixed point.
\end{thm}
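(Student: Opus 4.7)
The plan is to reduce the statement to the classical Schauder fixed point theorem (any continuous compact self-map of a nonempty closed bounded convex subset of a Banach space has a fixed point) via a radial retraction and a Leray--Schauder type dichotomy.

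By hypothesis there exists $R>0$ such that $\|x\|\le R$ for every $x$ satisfying $x=s\Phi(x)$ for some $s\in[0,1]$. I would fix $R'>R$ and introduce the radial retraction $r\colon X\to\overline{B(0,R')}$ defined by $r(y)=y$ if $\|y\|\le R'$ and $r(y)=R'y/\|y\|$ otherwise. This $r$ is continuous, maps $X$ into the closed ball $\overline{B(0,R')}$, and preserves compactness under composition (since $r$ is bounded and continuous while $\Phi$ is compact). Hence $\Psi:=r\circ\Phi$, restricted to $\overline{B(0,R')}$, is a continuous compact self-map of a closed bounded convex set, and the classical Schauder theorem yields a point $x_0\in\overline{B(0,R')}$ with $x_0=r(\Phi(x_0))$.

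It then remains to promote $x_0$ to a genuine fixed point of $\Phi$. If $\|\Phi(x_0)\|\le R'$, the retraction acts as the identity at $\Phi(x_0)$ and $x_0=\Phi(x_0)$ directly. Otherwise $\|\Phi(x_0)\|>R'$ and
\[ x_0=\frac{R'}{\|\Phi(x_0)\|}\,\Phi(x_0) = s\,\Phi(x_0),\qquad s=\frac{R'}{\|\Phi(x_0)\|}\in(0,1). \]
The a priori bound then forces $\|x_0\|\le R$, while by construction $\|x_0\|=R'>R$, a contradiction. Choosing $R'$ strictly greater than $R$ at the outset was designed precisely to rule out this second alternative.

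The main subtlety is that the classical Schauder theorem cannot be applied directly to $\Phi$, because we have no a priori $\Phi$-invariant closed bounded convex subset of $X$; the retraction $r$ manufactures such a set while preserving continuity and compactness. The only nontrivial external ingredient is the ordinary Schauder fixed point theorem itself, whose proof proceeds by finite-dimensional approximation of the compact map together with Brouwer's theorem; aside from that, the argument above is essentially organizational.
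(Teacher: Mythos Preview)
Your argument is correct and is essentially the standard proof of this result (often called Schaefer's theorem or the Leray--Schauder alternative): retract onto a large ball, apply the classical Schauder theorem, and use the a priori bound to exclude the boundary case. The compactness of $\Psi=r\circ\Phi$ follows since $\Phi$ sends the bounded ball to a relatively compact set and the continuous image of a relatively compact set remains relatively compact; your contradiction in the second alternative is clean.

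Regarding comparison with the paper: there is nothing to compare against. The paper does not prove Theorem~\ref{Schauder}; it merely quotes it as a known tool (``the following version of the Schauder fixed point theorem'') and then applies it to the operator $\Phi$ constructed on $C([0,T];L^{2\gamma})$. So you have supplied a proof where the paper gives none, and your proof is the textbook one.
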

Let us define the map $\Phi\colon C([0,T];L^{2\gamma})\to C([0,T];L^{2\gamma})$ in the following way:
\begin{enumerate}
\item For $\tilde{\varrho}\in C([0,T];L^{2\gamma})$, let $u$  be the unique solution to the equation
\begin{equation}\label{elliptic}
-\ddiv(\mu_0(|\D u|)\D u) - \Delta u -\nabla(1+\lambda(\ddiv u))\ddiv u + \nabla (\tilde\varrho)^\gamma = -\varepsilon\Delta^{2m}u, \quad \int u(t,x)\dd x=0.
\end{equation}

\item Then, let $\varrho$ be the solution to 
\begin{equation}\label{cont_epsilon} \varrho_t + \ddiv(\varrho u) +\delta\varrho^\beta = \delta\Delta\varrho, \quad \varrho_{|_{t=0}}=\varrho_{0,\delta}. \end{equation}

We set $\Phi(\tilde\varrho):= \varrho$. It is easy to see that the fixed point $\varrho$ and the corresponding $u$ solve our problem \eqref{approx}.
\end{enumerate}

First, let us show that the operator $\Phi$ is well-defined.

\begin{prop}
If $\tilde \varrho^\gamma \in L^\infty(0,T;L^2)$, then there exists a unique solution $u$ to equation (\ref{elliptic}), satisfying
\[ \|\nabla u\|_{L^\infty(0,T;L^2)} + \sqrt\varepsilon\|\Delta^m u\|_{L^\infty(0,T;L^2)} \leq C\|\tilde\varrho^\gamma\|_{L^\infty(0,T;L^2)}. \]
In particular, if $m$ is large enough, then
\[ \|u\|_{L^\infty(0,T;W^{1,\infty})} \leq \frac{C}{\sqrt\varepsilon}\|\tilde\varrho^\gamma\|_{L^\infty(0,T;L^2)}. \]
\end{prop}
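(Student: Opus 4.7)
The strategy is to observe that \eqref{elliptic} is elliptic in the spatial variable with time entering only as a parameter through the data $\tilde\varrho^\gamma$, so at each fixed $t$ the problem reduces to a standard nonlinear elliptic equation on the torus in the Hilbert space
\[
H = \Bigl\{ v \in W^{2m,2}(\T) : \int v \dd x = 0 \Bigr\},
\]
whose natural inner product is given by $\langle v,w\rangle = \int \Delta^m v \cdot \Delta^m w \dd x$ (equivalent to the standard one thanks to the zero-mean constraint and Poincaré). I would cast \eqref{elliptic} as the operator equation $\mathcal{A}(u) = F(t)$ where $F(t) = -\nabla \tilde\varrho^\gamma(t,\cdot) \in H^\ast$ and
\[
\langle \mathcal{A}(u), v\rangle = \int \mu_0(|\D u|)\D u : \D v \dd x + \int \nabla u : \nabla v \dd x + \int (1+\lambda(\ddiv u))\ddiv u \,\ddiv v \dd x + \varepsilon \int \Delta^m u \cdot \Delta^m v \dd x.
\]

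For the \emph{a priori} estimate, I would test the equation with $u$ itself. The nonlinear contributions $\int \mu_0(|\D u|)|\D u|^2 \dd x$ and $\int \lambda(\ddiv u)(\ddiv u)^2 \dd x$ are non-negative by \eqref{viscosity}, so they may be dropped. This yields
\[
\|\nabla u\|_{L^2}^2 + \|\ddiv u\|_{L^2}^2 + \varepsilon \|\Delta^m u\|_{L^2}^2 \leq \int \tilde\varrho^\gamma \ddiv u \dd x \leq \tfrac{1}{2}\|\ddiv u\|_{L^2}^2 + \tfrac{1}{2}\|\tilde\varrho^\gamma\|_{L^2}^2,
\]
after which absorbing and taking the essential supremum in $t$ gives the stated estimate uniformly in time.

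For existence and uniqueness I would invoke Minty--Browder on $\mathcal{A}$ at each fixed $t$. Monotonicity follows from the paper's standing hypothesis that $B \mapsto \mu_0(|B|)B$ and $s \mapsto \lambda(|s|)s$ are monotone together with linearity (hence monotonicity) of the terms $-\Delta u$ and $\varepsilon\Delta^{2m}u$; in fact the linear terms give strict monotonicity of the form $\langle \mathcal{A}(u_1)-\mathcal{A}(u_2), u_1-u_2\rangle \geq \|\nabla(u_1-u_2)\|_{L^2}^2 + \varepsilon \|\Delta^m(u_1-u_2)\|_{L^2}^2$, which yields uniqueness immediately. Coercivity on $H$ comes from the same computation as the \emph{a priori} estimate, and hemicontinuity is routine since $\mu_0(|\cdot|)\cdot$ and $\lambda(|\cdot|)\cdot$ are uniformly bounded by \eqref{viscosity}. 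Measurability in $t$ of the obtained $u(t,\cdot)$ follows from uniqueness and the Lipschitz-in-data dependence that is a by-product of the monotonicity argument.

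Finally, for the $W^{1,\infty}$ bound I would fix $m$ with $2m > 1 + d/2$, so that by Sobolev embedding on the torus one has $W^{2m,2}(\T) \hookrightarrow W^{1,\infty}(\T)$ and, exploiting the zero-mean condition, $\|u\|_{W^{1,\infty}} \leq C\|\Delta^m u\|_{L^2}$. Combining with the previous estimate gives the claimed factor $\varepsilon^{-1/2}$. The only mildly delicate point I anticipate is making sure measurability in $t$ is properly justified in the Bochner sense; everything else is standard monotone-operator theory made possible by the strongly coercive regularizer $\varepsilon \Delta^{2m}$.
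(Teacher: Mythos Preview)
Your proposal is correct. The a~priori estimate and the $W^{1,\infty}$ embedding are handled exactly as in the paper. The difference lies in the existence argument: you invoke Minty--Browder for the monotone, coercive, hemicontinuous operator $\mathcal{A}$ on $H$, whereas the paper exploits the fact that $\mathcal{A}$ is the gradient of a convex coercive functional
\[
I[v] = \int \Bigl( F(\nabla v) + \tfrac12 |\nabla v|^2 + \Lambda(\ddiv v) + \tfrac{\varepsilon}{2}|\Delta^m v|^2 - \tilde\varrho^\gamma \ddiv v \Bigr)\dd x,
\]
with $F$ and $\Lambda$ primitives of $\mu_0(|\cdot|)\cdot$ and $s+\lambda(|s|)s$, and obtains the solution as its unique minimizer. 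The two routes are essentially equivalent here because the operator has potential structure; your approach is marginally more robust in that it would still apply if the nonlinear terms were merely monotone without being gradients, while the variational argument is slightly more elementary and sidesteps the hemicontinuity check. Your remark on measurability in $t$ is well placed; the paper does not comment on it explicitly, but the Lipschitz dependence on $\tilde\varrho^\gamma$ that you extract from strict monotonicity settles it cleanly.
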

\begin{proof}
By multiplying the equation by $u$ and integrating over the torus, we get
\[\begin{aligned} \int \mu_0(|\D u|)|\D u|^2 + |\nabla u|^2 + (1+\lambda(\ddiv u))(\ddiv u)^2 +&\varepsilon|\Delta^m u|^2 \;\dd x = \int \tilde\varrho^\gamma\ddiv u \;\dd x \\
&\leq \eta\int (\ddiv u)^2\dd x + \frac{C}{\eta}\|\tilde\varrho^\gamma\|_{L^\infty(0,T;L^2)}^2, \end{aligned}\]
hence picking $\eta$ small enough and taking supremum over time, we get the desired estimate. 

For existence, we consider the functional $I$ defined in $\overline{H^{2m}}(\T) = \{v\in H^{2m}(\T): \int v \dd x =0\}$ given by
\[ I[v] := \int \Big(F(\nabla v) + \frac{1}{2}|\nabla v|^2 + \Lambda(\ddiv v) + \frac{\varepsilon}{2}|\Delta^m v|^2 - \tilde \varrho^\gamma(t,\cdot) \ddiv v\Big) \dd x, \]
where $F$ satisfies 
\[ \frac{\partial}{\partial b_{i,j}}F(B) = \mu_0(|B|)b_{i,j}, \]
for $B=(b_{i,j})_{i,j}\in \mathbb{R}^{d\times d}$
and $\Lambda$ is such that $\Lambda'(s) = s+\lambda(s)s$. In particular, the assumptions on $\mu_0$ and $\lambda$ imply that $F$ and $\Lambda$ are convex and bounded from below.

From the definitions of $F$ and $\Lambda$ it follows that any minimizer of $I$ corresponds to a weak solution to (\ref{elliptic}). By the convexity of $F$ and $\Lambda$, the functional $I$ is convex. Moreover, for certain $C$ and $\eta<C$,
\[ I[v] \geq \varepsilon\|\Delta^m v\|_{L^2}^2 + C\|\nabla v\|_{L^2}^2 - \eta\|\nabla v\|_{L^2}^2 - \frac{C}{\eta}\|\tilde \varrho^\gamma(t,\cdot)\|_{L^2}^2 \geq C\|v\|_{H^{2m}}^2 - C\|\tilde\varrho^\gamma\|_{L^\infty(0,T;L^2)}^2, \]
and thus $I$ is coercive. Therefore $I$ has at a.e. time level a unique minimizer $v(t,\cdot)\in \overline{H^{2m}}(\T)$  and in consequence there exists a unique $u\in L^\infty(0,T;H^{2m})$ with zero mean value over the torus solving (\ref{elliptic}).
\end{proof}

Now we use the following classical result for the heat equation:
\begin{prop}\label{parabolic_th}
Let $h\in L^2(0,T;L^q)$ for $1<q<\infty$. Then the solution to
\[ \partial_t\varrho -\varepsilon\Delta\varrho = h, \quad \varrho_{|_{t=0}}=\varrho_0 \]
satisfies the estimate
\begin{equation}\label{parabolic_est} \varepsilon^{1/2}\|\varrho\|_{L^\infty(0,T;W^{1,q})} + \|\partial_t\varrho\|_{L^2(0,T;L^q)} + \varepsilon\|\varrho\|_{L^2(0,T;W^{2,q})} \leq C \left(\varepsilon^{1/2}\|\varrho_0\|_{W^{1,q}} + \|h\|_{L^2(0,T;L^q)}\right). \end{equation}
Moreover, if $h=\ddiv w$, $w\in L^2(0,T;L^q)$, then
\begin{equation}\label{parabolic_est2} \varepsilon^{1/2}\|\varrho\|_{L^\infty(0,T;L^q)} + \varepsilon\|\nabla\varrho\|_{L^2(0,T;L^q)} \leq C\left(\varepsilon^{1/2}\|\varrho_0\|_{L^q} + \|w\|_{L^2(0,T;L^q)}\right). \end{equation}
\end{prop}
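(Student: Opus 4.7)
The plan is to reduce both estimates to the unit-viscosity case via the parabolic rescaling $\tau = \varepsilon t$, under which $\partial_t \varrho - \varepsilon \Delta \varrho = h$ becomes $\partial_\tau \tilde \varrho - \Delta \tilde \varrho = \tilde h$ with $\tilde\varrho(\tau,x) = \varrho(\tau/\varepsilon,x)$ and $\tilde h(\tau,x) = \varepsilon^{-1}h(\tau/\varepsilon,x)$, and then to invoke classical $L^p$-$L^q$ maximal parabolic regularity for the Laplacian on the torus. Elementary bookkeeping gives $\|\tilde h\|_{L^2 L^q} = \varepsilon^{-1/2}\|h\|_{L^2 L^q}$, $\|\partial_\tau\tilde\varrho\|_{L^2 L^q} = \varepsilon^{-1/2}\|\partial_t\varrho\|_{L^2 L^q}$, $\|\tilde\varrho\|_{L^2 W^{2,q}} = \varepsilon^{1/2}\|\varrho\|_{L^2 W^{2,q}}$, $\|\tilde\varrho\|_{L^\infty W^{1,q}} = \|\varrho\|_{L^\infty W^{1,q}}$, $\|\nabla\tilde\varrho\|_{L^2 L^q} = \varepsilon^{1/2}\|\nabla\varrho\|_{L^2 L^q}$, and $\|\tilde w\|_{L^2 L^q} = \varepsilon^{-1/2}\|w\|_{L^2 L^q}$ with $\tilde w(\tau,x) = \varepsilon^{-1} w(\tau/\varepsilon, x)$, so that each unit-viscosity estimate yields the claimed $\varepsilon$-weighted inequality after multiplication by $\varepsilon^{1/2}$.

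For \eqref{parabolic_est}, at $\varepsilon = 1$ I would invoke the standard $L^p$-$L^q$ maximal parabolic regularity of the Laplacian on $\T$, which gives $\|\partial_\tau\tilde\varrho\|_{L^2 L^q} + \|\tilde\varrho\|_{L^2 W^{2,q}} \leq C(\|\tilde\varrho_0\|_{W^{1,q}} + \|\tilde h\|_{L^2 L^q})$; this is classical and can be obtained either from the bounded $H^\infty$-calculus of $-\Delta$ on $L^q(\T)$, or by a direct Mihlin multiplier verification applied to the Fourier symbols $i\tau(i\tau+|k|^2)^{-1}$ and $|k|^2(i\tau+|k|^2)^{-1}$. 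The $L^\infty$-in-time bound on $\nabla\tilde\varrho$ then comes from the Lions-Peetre trace embedding $H^1((0,T);L^q) \cap L^2((0,T);W^{2,q}) \hookrightarrow C([0,T];(L^q,W^{2,q})_{1/2,2})$, together with the standard embedding of the resulting real interpolation space into $W^{1,q}$, the initial trace being absorbed by the hypothesis $\varrho_0 \in W^{1,q}$.

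For \eqref{parabolic_est2} I would prefer a direct energy argument, which neatly bypasses the fact that $\ddiv w$ need not lie in $L^2(L^q)$ by itself. Testing $\partial_\tau\tilde\varrho - \Delta\tilde\varrho = \ddiv\tilde w$ against $|\tilde\varrho|^{q-2}\tilde\varrho$ (regularized near zero when $q<2$) and integrating by parts produces
\[
\frac{1}{q}\frac{d}{d\tau}\|\tilde\varrho\|_{L^q}^q + (q-1)\int |\nabla\tilde\varrho|^2|\tilde\varrho|^{q-2}\dd x = -(q-1)\int \tilde w\cdot\nabla\tilde\varrho\,|\tilde\varrho|^{q-2}\dd x.
\]
Young's inequality absorbs half of the weighted dissipation into the left-hand side, H\"older bounds the remainder by $\|\tilde w\|_{L^q}^2\|\tilde\varrho\|_{L^q}^{q-2}$, and Gr\"onwall delivers the bound on $\|\tilde\varrho\|_{L^\infty L^q}$; the retained weighted dissipation then translates into $\|\nabla\tilde\varrho\|_{L^2 L^q}$ via the chain-rule identity $\nabla|\tilde\varrho|^{q/2} = \tfrac{q}{2}\,\mathrm{sgn}(\tilde\varrho)|\tilde\varrho|^{q/2-1}\nabla\tilde\varrho$. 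Rescaling back in $\tau = \varepsilon t$ concludes.

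The delicate point in the whole argument is the $L^\infty(W^{1,q})$ time-continuity in \eqref{parabolic_est}: it cannot be extracted from the $L^2(W^{2,q}) \cap H^1(L^q)$ bounds by a one-line Sobolev-in-time argument, and genuinely requires the Lions-Peetre vector-valued trace theorem together with some care about the real-interpolation identification $(L^q,W^{2,q})_{1/2,2}$ versus $W^{1,q}$. Everything else is routine bookkeeping once the $\varepsilon = 1$ estimates are available.
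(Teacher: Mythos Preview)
The paper does not prove this proposition; it simply records it as a ``classical result for the heat equation'' and uses it as a black box. Your rescaling $\tau=\varepsilon t$ and appeal to $L^p$--$L^q$ maximal parabolic regularity for the torus Laplacian are the standard route and are fine for the $L^2(W^{2,q})$ and $L^2(L^q)$ pieces of \eqref{parabolic_est}.

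There is, however, a genuine gap in your energy argument for \eqref{parabolic_est2}. Testing against $|\tilde\varrho|^{q-2}\tilde\varrho$ produces the weighted dissipation $\int_0^T\!\!\int |\nabla\tilde\varrho|^2|\tilde\varrho|^{q-2}\dd x\dd\tau$, which by the chain rule you cite equals (up to a constant) $\|\nabla|\tilde\varrho|^{q/2}\|_{L^2_{\tau,x}}^2$. This is \emph{not} the same as $\|\nabla\tilde\varrho\|_{L^2(L^q)}^2$. For $q\le 2$ one can recover the desired bound by H\"older, since
\[
\|\nabla\tilde\varrho\|_{L^q}^2 \le \Big(\int |\nabla\tilde\varrho|^2|\tilde\varrho|^{q-2}\dd x\Big)\,\|\tilde\varrho\|_{L^q}^{2-q},
\]
but for $q>2$ the inequality goes the wrong way: where $\tilde\varrho$ is small the weight $|\tilde\varrho|^{q-2}$ kills information about $\nabla\tilde\varrho$, and no combination of the weighted dissipation and $\|\tilde\varrho\|_{L^q}$ controls $\|\nabla\tilde\varrho\|_{L^q}$. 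Since the paper later applies \eqref{parabolic_est2} with arbitrarily large $r$, the range $q>2$ is exactly the one that matters. The clean fix is to obtain $\|\nabla\tilde\varrho\|_{L^2(L^q)}$ from the Mihlin multiplier bound for the symbol $k\otimes k/(i\tau+|k|^2)$ (i.e.\ maximal regularity one order lower), keeping your energy computation only for the $L^\infty(L^q)$ part, which does work for all $q$.

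A related caution applies to the $L^\infty(W^{1,q})$ term in \eqref{parabolic_est}: the Lions--Peetre trace lands in $(L^q,W^{2,q})_{1/2,2}=B^1_{q,2}$, and $B^1_{q,2}\hookrightarrow W^{1,q}$ holds only for $q\ge 2$. For $q<2$ the embedding fails, so the trace argument as stated does not close. An easy repair, once \eqref{parabolic_est2} is established for all $q$, is to differentiate the equation in space and apply the $L^\infty(L^q)$ part of \eqref{parabolic_est2} to each $\partial_j\tilde\varrho$ with $w=he_j$.
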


From the previous proposition, we can also conclude
\begin{prop}\label{existence_rho}
If $u\in L^\infty(0,T;W^{1,\infty})$, then for equation (\ref{cont_epsilon}) there exists a unique nonnegative solution $\varrho\in L^\infty(0,T;W^{1,r})$ with $\partial_t\varrho \in L^2(0,T;W^{-1,r})$ for any $r<\infty$.
\end{prop}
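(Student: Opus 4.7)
The plan is to set up a short-time fixed-point argument using Proposition~\ref{parabolic_th}, derive time-uniform a priori estimates to globalize, and establish nonnegativity via the parabolic maximum principle. Concretely, for local existence I would apply Banach's fixed-point theorem to the map $\Psi\colon\bar\varrho\mapsto\varrho$, where $\varrho$ solves the linear problem
\[ \partial_t\varrho-\delta\Delta\varrho=-\ddiv(\bar\varrho u)-\delta\bar\varrho^\beta,\qquad \varrho_{|_{t=0}}=\varrho_{0,\delta}. \]
Splitting the right-hand side into the divergence contribution $-\ddiv(\bar\varrho u)$ (manageable since $u\in L^\infty(0,T;W^{1,\infty})$) and the source $-\delta\bar\varrho^\beta$, both estimates of Proposition~\ref{parabolic_th} provide control of $\|\varrho\|_{L^\infty(0,T_\ast;W^{1,r})}$ and $\|\partial_t\varrho\|_{L^2(0,T_\ast;L^r)}$ in terms of $\|\bar\varrho\|_{L^\infty_{t,x}}$. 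For $r$ large, $T_\ast$ small and $\bar\varrho$ ranging in a closed ball of $L^\infty_{t,x}$, the map is a contraction because $s\mapsto s^\beta$ is locally Lipschitz, yielding a unique local solution.

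For nonnegativity I would exploit that the equation may be rewritten as the linear parabolic equation
\[ \partial_t\varrho+u\cdot\nabla\varrho+\bigl(\ddiv u+\delta\varrho^{\beta-1}\bigr)\varrho-\delta\Delta\varrho=0 \]
with bounded coefficients along the local solution, so $\varrho_{0,\delta}>0$ together with the weak maximum principle gives $\varrho\ge0$. Equivalently, testing the original equation against $-\varrho_-$ and exploiting that $\varrho^\beta\varrho_-=-\varrho_-^{\beta+1}$ for even $\beta$ yields a Gronwall inequality for $\|\varrho_-\|_{L^2}^2$ whose initial value vanishes, forcing $\varrho_-\equiv 0$. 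Uniqueness follows from the same energy test applied to the difference $\varrho_1-\varrho_2$, using the monotonicity $(\varrho_1^\beta-\varrho_2^\beta)(\varrho_1-\varrho_2)\ge 0$ on $[0,\infty)$ and the boundedness of $\ddiv u$.

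Globalization and the final regularity $\varrho\in L^\infty(0,T;W^{1,r})$, $\partial_t\varrho\in L^2(0,T;W^{-1,r})$ for every $r<\infty$ come from the $L^r$ estimate. Multiplying the equation by $r\varrho^{r-1}$ (now legitimate since $\varrho\ge0$), integrating, and discarding the nonnegative absorption $r\delta\int\varrho^{r+\beta-1}\dd x$ yields
\[ \frac{d}{dt}\int\varrho^r\dd x+c_r\delta\int\bigl|\nabla\varrho^{r/2}\bigr|^2\dd x\le(r-1)\|\ddiv u\|_\infty\int\varrho^r\dd x, \]
so Gronwall provides $\varrho\in L^\infty(0,T;L^r)$ for every $r<\infty$. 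Feeding this back into Proposition~\ref{parabolic_th} with the right-hand side now in $L^2(0,T;L^r)$ upgrades the local solution to the claimed regularity and allows continuation to any $T>0$.

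The principal obstacle I anticipate is the superlinear absorption $\delta\bar\varrho^\beta$ in the fixed-point step: it rules out a direct contraction in $L^p$-type spaces and forces the iteration to be performed in $L^\infty_{t,x}$ on a small interval. The crucial point that makes the argument close is that the a priori $L^r$ bound is independent of the local interval length and of the contraction ball radius, so the local solution extends globally without any loss of regularity.
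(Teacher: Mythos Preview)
Your proposal is correct and reaches the same conclusion as the paper, but by a genuinely different route. The paper obtains a global $L^\infty(0,T;W^{1,2})$ solution directly by Galerkin approximation (details omitted there as ``classical''), then bootstraps regularity exactly as you do: it tests by $p\varrho^{p-1}$ to get $\varrho\in L^\infty(0,T;L^p)$ for all $p$, passes to $p\to\infty$ to obtain $\varrho\in L^\infty_{t,x}$, and then feeds first $\varrho u$ into estimate~\eqref{parabolic_est2} and afterwards $\ddiv(\varrho u)+\delta\varrho^\beta$ into estimate~\eqref{parabolic_est} to reach $L^\infty(0,T;W^{1,r})$. Your alternative --- a short-time Banach fixed point followed by continuation via the time-uniform $L^p$ bound --- is equally legitimate; its advantage is that the local step already places the solution in $L^\infty_{t,x}$, which makes the nonnegativity argument cleaner. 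Its cost is the need to check that the continuation interval is uniform, which you correctly identify.

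Two small points deserve attention. First, to close the contraction using only Proposition~\ref{parabolic_th} you should work in $L^\infty(0,T_\ast;W^{1,r})$ for some $r>d$ rather than in bare $L^\infty_{t,x}$: estimate~\eqref{parabolic_est2} alone gives only $L^\infty_tL^q_x$ for the divergence part, whereas in $W^{1,r}$ the term $\ddiv(\bar\varrho u)=u\cdot\nabla\bar\varrho+\bar\varrho\,\ddiv u$ lies in $L^2(0,T_\ast;L^r)$ and~\eqref{parabolic_est} applies directly. Second, for even $\beta$ one has $\varrho^\beta\varrho_-=\varrho_-^{\beta+1}\ge 0$ (not $-\varrho_-^{\beta+1}$), so the absorption term actually appears with the \emph{wrong} sign in the $\|\varrho_-\|_{L^2}^2$ inequality; the Gronwall argument nonetheless closes because your local solution is in $L^\infty_{t,x}$, whence $\delta\int\varrho_-^{\beta+1}\le \delta\|\varrho\|_{L^\infty}^{\beta-1}\int\varrho_-^2$. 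The maximum-principle variant you mention avoids this subtlety altogether.
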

\begin{proof}
We construct the solution $\varrho\in L^\infty(0,T;W^{1,2})$ by the Galerkin approximation. The nonnegativity of solutions is obtained by testing by negative part of $\varrho$ and is a conclusion of the fact that $\varrho^\beta\geq 0$. Then we improve the regularity by bootstrapping argument. We skip the details of these steps since they are based on classical arguments. Next, testing equation (\ref{cont_epsilon}) by $p\varrho^{p-1}$, we have 
\[ \frac{\dd}{\dd t}\int \varrho^p\dd x \leq (p-1)\|\ddiv u\|_{L^\infty}\int \varrho^p\dd x \]
and therefore 
\[ \|\varrho\|_{L^\infty(0,T;L^p)} \leq \|\varrho_{0,\varepsilon}\|_{L^p}e^{\frac{p-1}{p}\|u\|_{L^1(0,T;W^{1,\infty})}}. \]
Taking $p\to\infty$ we have $\varrho\in L^\infty((0,T)\times\T)$. In consequence $\varrho u\in L^2(0,T;L^r)$ for any $r<\infty$ and we can use (\ref{parabolic_est2}) to obtain
\[ \nabla\varrho \in L^2(0,T;L^r). \]
Employing the fact that $u\in L^\infty(0,T;W^{1,\infty})$, we have $\ddiv(\varrho u)\in L^2(0,T;L^r)$ and by (\ref{parabolic_est}) with $h= -\ddiv u - \delta \varrho^\beta$,
$\varrho\in L^\infty(0,T;W^{1,r})$ for any $r<\infty$, whereas the estimate for $\partial_t\varrho$ comes directly from the equation (\ref{cont_epsilon}). The uniqueness follows directly from the estimate
\[ \frac{\dd}{\dd t}\int (\varrho_1-\varrho_2)^2\dd x \leq C\int (\varrho_1-\varrho_2)\ddiv u \dd x, \]
where $\varrho_1$ and $\varrho_2$ are two possibly distinct solutions to problem \eqref{cont_epsilon}. 
\end{proof}

We will now show the properties of $\Phi$, which will allow us to apply directly the Schauder fixed point theorem (Theorem \ref{Schauder}).

\begin{prop}
The operator $\Phi$ is continuous and compact from $C([0,T];L^{2\gamma})$ to itself. Moreover, the set
\[ \{\varrho\in C([0,T];L^{2\gamma}): \varrho=s\Phi(\varrho) \quad \text{for some} \quad s\in[0,1] \} \]
is bounded.
\end{prop}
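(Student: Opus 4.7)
The plan has three independent parts, each reducing to an application of the propositions already established in this section.

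\textbf{Compactness.} Let $\{\tilde\varrho_n\}$ be bounded in $C([0,T];L^{2\gamma})$. The first proposition of this section gives a uniform $L^\infty(0,T;W^{1,\infty})$ bound on the corresponding velocities $u_n$, once $m$ is chosen so that $H^{2m}(\T)\hookrightarrow W^{1,\infty}(\T)$. Proposition \ref{existence_rho} then yields uniform bounds on $\varrho_n = \Phi(\tilde\varrho_n)$ in $L^\infty(0,T;W^{1,r})$ and on $\partial_t\varrho_n$ in $L^2(0,T;L^r)$ for every $r<\infty$. Taking $r>d$, the embedding $W^{1,r}(\T)\subset L^{2\gamma}(\T)$ is compact, and the Aubin--Lions lemma delivers relative compactness of $\Phi\{\tilde\varrho_n\}$ in $C([0,T];L^{2\gamma})$.

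\textbf{Continuity.} Given $\tilde\varrho_n \to \tilde\varrho$ in $C([0,T];L^{2\gamma})$, the Sobolev embedding applied to the uniform $W^{1,r}$ estimate of step one yields a uniform $L^\infty$ bound, so $\tilde\varrho_n^\gamma \to \tilde\varrho^\gamma$ in $C([0,T];L^2)$ by interpolation. Testing the difference of the two elliptic equations for $u_n$ and $u$ against $u_n - u$ and using the monotonicity of $B\mapsto \mu_0(|B|)B$ and $s\mapsto \lambda(|s|)s$ yields
\[ \|\nabla(u_n-u)\|_{L^\infty(0,T;L^2)}^2 + \varepsilon\|\Delta^m(u_n-u)\|_{L^\infty(0,T;L^2)}^2 \leq C\|\tilde\varrho_n^\gamma - \tilde\varrho^\gamma\|_{L^\infty(0,T;L^2)}^2, \]
whence $u_n \to u$ in $L^\infty(0,T;W^{1,\infty})$. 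The difference $\varrho_n - \varrho$ then solves a linear parabolic equation whose source is linear in $u_n - u$ (and in $\varrho(u_n-u)$), so an $L^2$ energy estimate with Gronwall's inequality gives convergence in $L^\infty(0,T;L^2)$; interpolating with the uniform $L^\infty(0,T;W^{1,r})$ bound upgrades this to convergence in $C([0,T];L^{2\gamma})$.

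\textbf{Boundedness of the fixed-point-like set.} If $\varrho = s\Phi(\varrho)$ with $s \in (0,1]$ (the case $s=0$ forcing $\varrho\equiv 0$), set $\bar\varrho = \Phi(\varrho) = \varrho/s$. Since $\bar\varrho$ satisfies \eqref{cont_epsilon} with $u$ coming from \eqref{elliptic} driven by $\varrho$, rescaling yields
\[ \varrho_t + \ddiv(\varrho u) + \delta s^{1-\beta}\varrho^\beta = \delta\Delta\varrho, \qquad \varrho|_{t=0} = s\varrho_{0,\delta}, \]
with $s^{1-\beta}\geq 1$ since $s\leq 1$ and $\beta>1$. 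Multiplying by $\beta\varrho^{\beta-1}$ and integrating (discarding the nonnegative diffusion contribution) gives
\[ \frac{d}{dt}\int\varrho^\beta\dd x + \delta\beta\int\varrho^{2\beta-1}\dd x \leq (\beta-1)\int\varrho^\beta|\ddiv u|\dd x. \]
Hölder with exponents $(2\beta-1)/\beta$ and $(2\beta-1)/(\beta-1)$ followed by Young's inequality absorbs a fraction of $\|\varrho\|_{L^{2\beta-1}}^{2\beta-1}$ into the dissipation; the residual is controlled via the elliptic estimate $\|\ddiv u\|_{L^\infty}\leq C\varepsilon^{-1/2}\|\varrho\|_{L^{2\gamma}}^\gamma$ combined with interpolating $L^{2\gamma}$ between $L^1$ (bounded a priori by integrating the equation and using the good signs of $\delta s^{1-\beta}\varrho^\beta$ and $\delta\Delta\varrho$) and $L^{2\beta-1}$. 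The hypothesis $\beta\geq\gamma+1$ is precisely what makes the resulting exponents match and permits a further Young absorption, closing a nonlinear Gronwall inequality for $\int\varrho^\beta\dd x$. This yields a uniform bound on $\varrho$ in $L^\infty(0,T;L^\beta)$ depending only on $\|\varrho_{0,\delta}\|_{L^\infty}$, $T$, $\delta$ and $\varepsilon$, and hence a uniform $C([0,T];L^{2\gamma})$ bound, either directly when $\beta\geq 2\gamma$ or after iterating the energy argument at the higher exponent $2\gamma$ using the freshly obtained $L^\beta$ bound for the interpolation. This coupling-induced nonlinear Gronwall closure is the chief obstacle: the superlinear damping $\delta\varrho^\beta$ barely dominates the advection error at the critical threshold $\beta=\gamma+1$.
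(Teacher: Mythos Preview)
Your treatment of compactness and continuity follows the paper's line almost verbatim: Aubin--Lions for the image and a monotonicity-plus-Gronwall argument for the dependence on $\tilde\varrho$. One small slip: the $W^{1,r}$ bound in your continuity step is available for $\varrho_n=\Phi(\tilde\varrho_n)$, not for $\tilde\varrho_n$ itself, so it cannot be invoked to get $\tilde\varrho_n\in L^\infty$. This is harmless, since $\tilde\varrho_n\to\tilde\varrho$ in $L^{2\gamma}$ already implies $\tilde\varrho_n^\gamma\to\tilde\varrho^\gamma$ in $L^2$ by the elementary H\"older bound $\|a^\gamma-b^\gamma\|_{L^2}\le C(\|a\|_{L^{2\gamma}}+\|b\|_{L^{2\gamma}})^{\gamma-1}\|a-b\|_{L^{2\gamma}}$.

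Where you genuinely diverge from the paper is in the boundedness of the fixed-point set. You run a nonlinear Gronwall scheme: test the rescaled continuity equation by $\beta\varrho^{\beta-1}$, exploit the damping $\delta s^{1-\beta}\varrho^\beta$ to produce a good $L^{2\beta-1}$ term, feed the elliptic control $\|\ddiv u\|_{L^\infty}\lesssim\|\varrho\|_{L^{2\gamma}}^\gamma$ back in, and close by interpolation and Young. The exponent bookkeeping does close under $\beta\ge\gamma+1$, so your argument is correct. The paper, however, takes a much shorter road: it couples the two equations through the \emph{energy identity}. Testing the momentum equation by $u$ produces $\int\varrho^\gamma\ddiv u$, and the (rescaled) renormalized continuity equation converts this into $-\tfrac{1}{\gamma-1}\tfrac{d}{dt}\int\varrho^\gamma$ plus sign-definite terms. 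One obtains in a single stroke
\[
\|u\|_{L^2(0,T;H^{2m})}+\|\varrho\|_{L^\infty(0,T;L^\gamma)}\le C,
\]
hence $u\in L^1(0,T;W^{1,\infty})$ with a bound independent of $s$, after which the elementary estimate $\|\varrho\|_{L^\infty(0,T;L^p)}\le\|\varrho_0\|_{L^p}\exp(\|u\|_{L^1(0,T;W^{1,\infty})})$ finishes the job for any $p$. Your route shows that the damping alone suffices, without invoking the structural energy coupling; the paper's route is shorter and does not rely on the size of $\beta$ beyond making the approximate energy balance well defined.
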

\begin{proof}
 Let $\tilde\varrho_1, \tilde\varrho_2\in C([0,T];L^{2\gamma})$ and $u_1,u_2$ be the corresponding solutions to (\ref{elliptic}). As before, denote $\Phi(\tilde\varrho_i)=\varrho_i$, $i=1,2$. 

\textbf{Compactness.} From the previous propositions we know that $\varrho\in L^\infty(0,T;W^{1,2\gamma})$ and $\partial_t\varrho \in L^2(0,T;W^{-1,2\gamma})$ and the bounds are uniform for bounded sets of $\tilde \varrho$ in the given spaces. Therefore, the compactness of $\Phi$ in $C([0,T];L^{2\gamma})$ follows straight from a variant of the Aubin--Lions lemma from \cite{Simon}. 

\textbf{Continuity.} We will estimate $u_1-u_2$ in terms of $\tilde\varrho_1-\tilde\varrho_2$.  We have
\begin{multline*} -\ddiv\Big(\mu_0(|\D u_1|)\D u_1-\mu_0(|\D u_2|)\D u_2\Big) -\nabla\Big((1+\lambda(\ddiv u_1))\ddiv u_1-(1+\lambda(\ddiv u_2))\ddiv u_2\Big) \\ -\Delta(u_1-u_2) +\varepsilon\Delta^{2m}(u_1-u_2) = -\nabla(\tilde\varrho_1^\gamma-\tilde\varrho_2^\gamma). \end{multline*}
Multiplying the above equality by $u_1-u_2$ and integrating over $\T$, we get
\[
\mathcal{A}(u_1,u_2) + \int \varepsilon|\Delta^m(u_1-u_2)|^2 + |\nabla(u_1-u_2)|^2 \;\dd x \\
= \int (\tilde\varrho_1^\gamma-\tilde\varrho_2^\gamma)(\ddiv u_1-\ddiv u_2)\dd x, 
\]
where 
\begin{multline*} \mathcal{A}(u_1,u_2) = \int \left(\mu_0(|\D u_1|)\D u_1-\mu_0(|\D u_2|)\D u_2\right):(\D u_1-\D u_2) \\
+ \Big((1+\lambda(\ddiv u_1))\ddiv u_1 - (1+\lambda(\ddiv u_2))\ddiv u_2\Big)(\ddiv u_1-\ddiv u_2) \dd x \geq 0 \end{multline*}
from the monotonicity of the functions $B\mapsto\frac{B}{a+|B|}$ and $s\mapsto\lambda(s)s$. In consequence, we have
\[ \begin{aligned}
  \|\nabla(u_1-u_2)\|_{L^2(\T)}^2 + &\varepsilon\|\Delta^m(u_1-u_2)\|_{L^2(\T)}^2 \leq \|\tilde\varrho_1^\gamma-\tilde\varrho_2^\gamma\|_{L^2(\T)}\|\ddiv(u_1-u_2)\|_{L^2(\T)} \\
 &\leq C\left(\|\tilde\varrho_1\|_{L^\infty(\T)}^{\gamma-1}+\|\tilde\varrho_2\|_{L^\infty(\T)}^{\gamma-1}\right)\|\tilde\varrho_1-\tilde\varrho_2\|_{L^2(\T)}\|\nabla(u_1-u_2)\|_{L^2(\T)} \\
&\leq C(\eta)\|\tilde\varrho_1-\tilde\varrho_2\|_{L^2(\T)}^2 + \eta\|\nabla(u_1- u_2)\|_{L^2(\T)}^2.
 \end{aligned}
 \]
Hence, choosing $\eta$ small enough,
\[ \begin{aligned} \|u_1-u_2\|_{L^\infty(0,T;W^{1,\infty})} \leq C\|\Delta^m(u_1-u_2)\|_{L^\infty(0,T;L^2)} & \leq C\|\tilde\varrho_1-\tilde\varrho_2\|_{L^\infty(0,T;L^2)} \\ & \leq C\|\tilde\varrho_1-\tilde\varrho_2\|_{L^\infty(0,T;L^{2\gamma})}. 
\end{aligned} \]
Moreover, $\varrho_1-\varrho_2$ satisfy
\begin{equation}\label{roznica} \partial_t(\varrho_1-\varrho_2) +\delta(\varrho_1^\beta-\varrho_2^\beta) -\delta\Delta(\varrho_1-\varrho_2) = -\ddiv(\varrho_1u_1-\varrho_2u_2) \end{equation}
with 
\[ (\varrho_1-\varrho_2)_{|_{t=0}}=0. \]

Let us now estimate $\|\varrho_1-\varrho_2\|_{L^\infty(0,T;L^p)}$. First, we write $\ddiv(\varrho_1u_1-\varrho_2u_2)$ as
\[ \ddiv(\varrho_1u_1-\varrho_2u_2) = u_1\nabla(\varrho_1-\varrho_2) + \nabla\varrho_2(u_1-u_2) + \ddiv u_1(\varrho_1-\varrho_2) + \varrho_2(\ddiv u_1-\ddiv u_2). \]
Then, multiplying (\ref{roznica}) by $p|\varrho_1-\varrho_2|^{p-2}(\varrho_1-\varrho_2)$ and integrating, we obtain
\[ \begin{aligned} \frac{\dd}{\dd t}\int |\varrho_1-\varrho_2|^p\dd x + & \delta p\int\left( |\varrho_1-\varrho_2|^{p-2}(\varrho_1^\beta-\varrho_2^\beta)(\varrho_1-\varrho_2) + (p-1)|\varrho_1-\varrho_2|^{p-2}|\nabla(\varrho_1-\varrho_2)|^2 \right)\dd x \\
=& -(p-1)\int |\varrho_1-\varrho_2|^p\ddiv u_1 \dd x \\
&- \int (\nabla\varrho_2(u_1-u_2) + \varrho_2(\ddiv u_1-\ddiv u_2))|\varrho_1-\varrho_2|^{p-2}(\varrho_1-\varrho_2) \dd x.  \end{aligned} \]
In consequence, as $(\varrho_1^\beta-\varrho_2^\beta)(\varrho_1-\varrho_2)\geq 0$,
\begin{multline*} \frac{\dd}{\dd t}\int |\varrho_1-\varrho_2|^p\dd x \leq (p-1)\|u_1\|_{W^{1,\infty}}\int |\varrho_1-\varrho_2|^p\dd x \\
+ \|\varrho_2\|_{W^{1,p}(\T)}\|u_1-u_2\|_{W^{1,\infty}(\T)}\|\varrho_1-\varrho_2\|_{L^p(\T)}^{p-1}. \end{multline*}
Therefore from Gronwall's lemma
\[ \|\varrho_1-\varrho_2\|_{L^\infty(0,T;L^p)} \leq C\|u_1-u_2\|_{L^\infty(0,T;W^{1,\infty})}, \]
where $C$ depends on $T$, $\|u_1\|_{L^1(0,T;W^{1,\infty})}$ and $\|\varrho_2\|_{L^2(0,T;W^{1,p})}$. In particular, 
\[ \|\varrho_1-\varrho_2\|_{L^\infty(0,T;L^{2\gamma})} \leq C\|u_1-u_2\|_{L^\infty(0,T;W^{1,\infty})} \leq C\|\tilde\varrho_1-\tilde\varrho_2\|_{L^\infty(0,T;L^{2\gamma})}. \]

\textbf{Estimates for the fixed points.} To complete the proof of the Proposition, we need to check if the points satisfying $\varrho=s\Phi(\varrho)$ are bounded in $L^\infty(0,T;L^{2\gamma})$ for any $s\in [0,1]$. Throughout the proof we will denote by $C$ various constants independent on $s$. We have for $s>0$ (if $s=0$, the proof is trivial)
\[ \frac 1s\partial_t\varrho  + \frac 1s\ddiv(\varrho u) + \frac{\delta}{s^\beta}\varrho^\beta = \frac 1s\delta\Delta\varrho \]
and 
\[ -\ddiv\left(\mu_0(|\D u|)\D u\right) -\Delta u-\nabla(1+\lambda(\ddiv u))\ddiv u +\varepsilon\Delta^{2m}u+ \nabla\varrho^\gamma =0. \] 
Multiplying the momentum equation by $u$ and integrating, we obtain analogously as for the a priori estimates
\begin{multline*} \|u\|_{L^2(0,T;W^{1,2})}^2 + \varepsilon\|\Delta^m u\|_{L^2((0,T)\times\T)}^2 + \|\varrho\|_{L^\infty(0,T;L^\gamma)}^\gamma \\
+ \frac{\delta}{s^{\beta-1}}\frac{\gamma}{\gamma-1}\int_0^T\int \varrho^{\beta+\gamma-1}\dd x\dd t +\delta\gamma \int_0^T \int |\nabla \varrho|^2 \varrho^{\gamma-2} \dd x \dd t \leq \int \varrho_{0,\varepsilon}^\gamma \dd x \leq C. \end{multline*}
Repeating the estimate from Proposition \ref{existence_rho}, we also get
\[ \|\varrho\|_{L^\infty(0,T;L^{2\gamma})} \leq \|\varrho_{0,\varepsilon}\|_{L^\infty(\T)}e^{\|u\|_{L^1(0,T;W^{1,\infty})}} \leq C. \]
\end{proof}
In consequence, the assumptions of Theorem \ref{Schauder} are satisfied and there exists at least one solution to (\ref{approx}) on $[0,T]\times\T$ for arbitrary $T>0$.

\section{Compactness}\label{compactness}

We will now prove that we can pass to the limit with $\delta,\varepsilon\to 0$ to obtain the solutions to system (\ref{main}). First, we will pass to the limit with $\varepsilon\to 0$ and then we improve the estimates on $\varrho$ uniform in $\delta$ and perform the second limit passage. Below, by $\overline{f}$ we will denote the weak limit of $f$.

\subsection{Limit passage with $\varepsilon\to 0$}

Let $(\varrho_{\delta,\varepsilon}, u_{\delta,\varepsilon})$ be a solution to (\ref{approx}). We have the following estimates uniform in $\varepsilon$ (here we use the lower bounds on $\beta)$:
\[ 
\begin{aligned}
\|u_{\delta,\varepsilon}\|_{L^2(0,T;W^{1,2})}^2 &+ \|\varrho_{\delta,\varepsilon}\|_{L^\infty(0,T;L^\gamma)}^\gamma + \delta\|\nabla\varrho_{\delta,\varepsilon}^{\gamma/2}\|_{L^2((0,T)\times\T)}^2 \\&+ \delta\|\varrho_{\delta,\varrho}\|_{L^{\gamma+\beta-1}((0,T)\times\T)}^{\gamma +\beta-1} + \delta \|\nabla \varrho_{\delta, \varepsilon}\|_{L^2((0,T)\times \T)}^2 \leq C. 
\end{aligned}
\]

In particular, at least up to a subsequence,
\[ u_{\delta,\varepsilon}\rightharpoonup u_\delta \quad \text{in} \quad L^2(0,T;W^{1,2}). \]
Moreover, as $\beta+\gamma-1\geq 2\gamma$, we know that
\[ \|\varrho_{\delta,\varepsilon}^\gamma\|_{L^2((0,T)\times\T)} \leq C(\delta) \]
and
\[ \|\varrho_{\delta,\varepsilon}u_{\delta,\varepsilon}\|_{L^p((0,T)\times\T)} \leq C(\delta) \]
for some suitable $p<2$. In consequence,
\[ \|\nabla \varrho_{\delta, \varepsilon}\|_{L^2((0,T)\times \T)}, \; \|\varrho_{\delta,\varepsilon}\|_{L^p((0,T)\times\T)}, \; \|\partial_t\varrho_{\delta,\varepsilon}\|_{L^p(0,T;W^{-1,p})} \leq C(\delta). \]
Therefore from the Aubin--Lions lemma $\varrho_{\delta,\varepsilon}\to \varrho_\delta$ in $L^p((0,T)\times\T)$, (at least up to a subsequence). Then we also have $\varrho_{\delta,\varepsilon}^\gamma\to \varrho_\delta^\gamma$ and $\varrho_{\delta,\varepsilon}^\beta \to\varrho_\delta^\beta$ in suitable $L^q$ spaces. In consequence, we are able to pass to the limit  in the continuity equation. For the momentum equation, first note that the regularizing term satisfies
\[ \varepsilon^{1/2}\|\Delta^m u_{\delta,\varepsilon}\|_{L^2((0,T)\times\T)}\leq C \]
and thus in the weak formulation
\[ \varepsilon\int_0^T\int \Delta^mu_{\delta,\varepsilon}\cdot \Delta^m\phi \dd x\dd t \to 0 \]
for $\phi\in C_0^\infty((0,T)\times\T)$.
Therefore in the weak formulation we obtain
\[ \int_0^T\int \overline{\mu_0(|\D u_\delta|)\D u_\delta}:\D\varphi + \nabla u_\delta:\nabla\varphi + \ddiv u_\delta\ddiv \varphi + \overline{\lambda(\ddiv u_\delta)\ddiv u_\delta}\ddiv \varphi - \varrho_\delta^\gamma\ddiv\varphi \dd x\dd t = 0. \]

Testing by $u_\delta$, we get
\begin{multline*} \int_0^T\int \overline{\mu_0(|\D u_\delta|)\D u_\delta}:\D u_\delta + |\nabla u_\delta|^2 + (\ddiv u_\delta)^2 + \overline{\lambda(\ddiv u_\delta)\ddiv u_\delta}\ddiv u_\delta -\varrho_\delta^\gamma\ddiv u_\delta \dd x\dd t  = 0. \end{multline*}
On the other hand,
\begin{multline*} \int_0^T\int \overline{\mu_0(|\D u_\delta|)|\D u_\delta|^2} + \overline{\lambda(\ddiv u_\delta)(\ddiv u_\delta)^2}\dd x\dd t +\limsup_{\varepsilon\to 0}\int_0^T\int |\nabla u_{\delta,\varepsilon}|^2 + (\ddiv u_{\delta,\varepsilon})^2\dd x\dd t \\
 - \int_0^T\int \varrho_\delta^\gamma\ddiv u \dd x\dd t \leq 0. \end{multline*}
Therefore using the monotonicity of $\mu_0(|\cdot|)\cdot$ and $\lambda(|\cdot|)\cdot$ and weak lower semicontinuity of the norm, we obtain the convergence $\nabla u_{\delta,\varepsilon}\to \nabla u_\delta$ in $L^2((0,T)\times\T)$, which allows us to pass to the limit in the remaining nonlinear terms.

\subsection{Limit passage with $\delta\to 0$}

Let $(\varrho_\delta,u_\delta)$ be the function obtained in the previous section, solving
\begin{equation}\label{approx2}
\begin{aligned}
\varrho_t + \ddiv(\varrho u) + \delta\varrho^\beta &= \delta\Delta\varrho, \\
-\ddiv((\mu_0(|\D u|)+1)\D u) - \nabla(\lambda(\ddiv u)\ddiv u) + \nabla\varrho^\gamma &= 0. \end{aligned} \end{equation}

Note that repeating the calculations from Section \ref{a_priori}, we get the estimate
\[ \|\ddiv u_\delta + \lambda(\ddiv u_\delta)\ddiv u_\delta -\varrho_\delta^\gamma\|_{L^\infty(0,T;BMO)} \leq C. \]
Moreover, using the uniform estimates on $\|u_\delta\|_{L^2(0,T;W^{1,2})}$ and $\|\varrho_\delta\|_{L^\infty(0,T;L^\gamma)}$, we will improve the integrability of $\varrho_\delta$ uniformly in $\delta$. 

Let $p>1$. Define the function $P_k(\varrho)$ as 
\[ P_k(\varrho) = \varrho\int_0^\varrho \frac{T_k(z)^p}{z^2} \dd z, \]
where $T_k\in C^\infty([0,\infty))$ is the truncation operator, namely $T_k(z)=z$ for $z<k$, $T_k(z)=k+1$ for $z>2k$, $T_k'(z) \geq 0$ as well as $T_k(z)\nearrow z$ as $k\to\infty$. It is easy to see that $P_k(\varrho)\nearrow \varrho^p$ as well. Using the renormalized continuity equation, we get
\begin{equation}\label{P_k} \frac{\dd}{\dd t}\int P_k(\varrho_\delta)\dd x + \delta\int\left(\varrho_\delta^\beta P_k'(\varrho_\delta) + \frac{pT_k(\varrho_\delta)^{p-1}T_k'(\varrho_\delta)}{\varrho_\delta}|\nabla\varrho_\delta|^2\right) \dd x = -\int T_k(\varrho_\delta)^p\ddiv u_\delta \dd x. \end{equation}
Now, let us test the momentum equation by the function 
\[ \psi = \Delta^{-1}\nabla\left(T_k(\varrho_\delta)^p - \{T_k(\varrho_\delta)^p\}\right). \]
We have
\[\begin{aligned} \int_0^T\int & \varrho_\delta^\gamma T_k(\varrho_\delta)^p \dd x \\ &\leq  C\big(\|\mu_0(|\D u_\delta|)\D u_\delta\|_{L^\infty((0,T)\times\T)}+\|\lambda(\ddiv u_\delta)\ddiv u_\delta\|_{L^\infty((0,T)\times\T)}\big)\|\nabla\psi\|_{L^{1+\frac{\gamma}{p}}((0,T)\times\T)} \\
&+ 2\int_0^T\int T_k(\varrho_\delta)^p\ddiv u_\delta\dd x + \|\varrho_\delta\|_{L^\infty(0,T;L^\gamma)}\int_0^T\int T_k(\varrho_\delta)^p \dd x. \end{aligned}\]
By Cauchy inequality,
\[\begin{aligned}
\int_0^T\int \varrho_\delta^\gamma T_k(\varrho_\delta)^p \dd x \leq & C\|T_k(\varrho_\delta)\|_{L^{p+\gamma}((0,T)\times\T)}^p + 2\int_0^T\int T_k(\varrho_\delta)^p\ddiv u_\delta \dd x \\
&\leq \eta\|T_k(\varrho_\delta)\|_{L^{p+\gamma}((0,T)\times\T)}^{p+\gamma} + C(\eta) + 2\int_0^T\int T_k(\varrho_\delta)^p\ddiv u_\delta \dd x.
\end{aligned}\]
As $ T_k(\varrho)^{p+\gamma}\leq  \varrho^\gamma T_k(\varrho)^p$, for sufficiently small $\eta$ we get
\[ \int_0^T\int \varrho_\delta^\gamma T_k(\varrho_\delta)^p\dd x\dd t - 2\int_0^T\int T_k(\varrho_\delta)^p\ddiv u_\delta \dd x \leq C. \]
Therefore using (\ref{P_k}), we get
\[ \begin{aligned}
&\int_0^T\int \varrho_\delta^\gamma T_k(\varrho_\delta)^p\dd x\dd t + \sup_{t\in (0,T]} \int P_k(\varrho_\delta(t,\cdot))\dd x \\ &+\delta\int\left(\varrho_\delta^\beta P_k'(\varrho_\delta) + \frac{pT_k(\varrho_\delta)^{p-1}T_k'(\varrho_\delta)}{\varrho_\delta}|\nabla\varrho_\delta|^2\right) \dd x \leq C (T,p). 
\end{aligned}\]
We pass to the limit with $k\to\infty$ using monotone convergence theorem and in consequence
\[ \|\varrho_\delta\|_{L^\infty(0,T;L^p)} \leq C(T,p) \quad \text{for any} \quad p<\infty. \]

Having that estimate, we are ready to pass to the limit with $\delta\to 0$.

From the estimates uniform in $\delta$, we know that in particular
\[ \begin{aligned} u_\delta \rightharpoonup u \quad &\text{in} \quad L^2(0,T;W^{1,2}), \\
\varrho_\delta \rightharpoonup^* \varrho \quad &\text{in} \quad L^\infty(0,T;L^\gamma), \\
 \varrho_\delta^\gamma \rightharpoonup^* \overline{\varrho^\gamma} \quad &\text{in} \quad L^\infty(0,T;L^p) \end{aligned}\]
and 
\[ \mu_0(\D u_\delta)\D u_\delta, \; \lambda(\ddiv u_\delta)\ddiv u_\delta \rightharpoonup^* \overline{\mu_0(|\D u|)\D u}, \; \overline{\lambda(\ddiv u)\ddiv u} \quad \text{in} \quad L^\infty((0,T)\times\T). \]
Moreover, $\|\ddiv u+\overline{\lambda(\ddiv u)\ddiv u}-\overline{\varrho^\gamma}\|_{L^\infty(0,T;BMO)}\leq C$.
Note that from the continuity equation it also follows that $\varrho_\delta\to\varrho$ in $C([0,T];W^{-1,r})$ for a suitable $r$, and in consequence $\overline{\varrho u}=\varrho u$. Having the above estimates and testing the continuity equation by $\varrho_\delta$, we also obtain
\[ \delta^{1/2}\|\nabla\varrho_\delta\|_{L^2((0,T)\times\T)} \leq C. \]
Then for $\phi\in C_0^\infty((0,T)\times\Omega)$, together with the estimate on $\|\varrho_\delta\|_{L^\infty(0,T;L^p)}$,
\[ \delta\int_0^T\int \left(\varrho_\delta^\beta\varphi +\nabla\varrho_\delta\cdot\nabla\phi\right)\dd x\dd t \to 0 \quad \text{with} \quad \delta\to 0. \]
In consequence, $(\varrho,u)$ satisfies the continuity equation (\ref{main}) in the renormalized sense. 

Next, we will pass to the limit in the momentum equation and apply an argument from \cite{feireisl_global_2015}.  Passing to the limit in the weak formulation, we get for any $\phi\in C_0^\infty((0,T)\times\T)$ and $t\leq T$
\begin{equation}\label{form_weak} \int_0^t\int \overline{\mu_0(|\D u|)\D u}:\D\phi + \nabla u\cdot\nabla\phi + \ddiv u\ \ddiv\phi + \overline{\lambda(\ddiv u)\ddiv u}\ \ddiv\phi \;\dd x\dd s = \int_0^t\int \overline{\varrho^\gamma}\ddiv\phi\dd x\dd s. \end{equation}
The regularity of $u$ allows us to put $\phi=u$ in (\ref{form_weak}) and then
\begin{equation}\label{do_zwartosci} \int_0^t\int \overline{\mu_0(|\D u|)\D u}:\D u + |\nabla u|^2 + (\ddiv u)^2 + \overline{\lambda(\ddiv u)\ddiv u}\ \ddiv u \;\dd x\dd s =\int_0^t\int \overline{\varrho^\gamma}\ddiv u \dd x\dd s. \end{equation}
On the other hand, the solutions to approximate equation (\ref{approx2}) satisfy
\begin{multline}\label{approx_energy}
\int_0^t\int \mu_0(|\D u_\delta|)|\D u_\delta|^2 \dd x\dd s +|\nabla u_\delta|^2 + (\ddiv u_\delta)^2 + \lambda(\ddiv u_\delta)(\ddiv u_\delta)^2 \;\dd x\dd s \\
+ \frac{1}{\gamma-1}\int \varrho_\delta^\gamma(t,\cdot)\dd x \leq \frac{1}{\gamma-1}\int \varrho_{0,\delta}^\gamma \dd x.
\end{multline}
Using the monotonicity of $\mu_0(|\D u|)\D u$ and $\lambda(\ddiv u)\ddiv u$, we know that
\begin{equation}\label{mono_limit} \overline{\mu_0(|\D u|)|\D u|^2} \geq \overline{\mu_0(|\D u|)\D u}:\D u \end{equation}
and
\begin{equation}\label{lambda_limit}
    \overline{\lambda(\ddiv u)(\ddiv u)^2} \geq \overline{\lambda(\ddiv u)\ddiv u}\ \ddiv u.
\end{equation}
Therefore, taking $\liminf_{\delta\to 0}$ in the energy inequality (\ref{approx_energy}), we obtain
\begin{multline*} \int_0^t\int \overline{\mu_0(|\D u|)\D u}:\D u + |\nabla u|^2 + (\ddiv u)^2 + \overline{\lambda(\ddiv u)\ddiv u}\ \ddiv u \;\dd x\dd s \\
+ \frac{1}{\gamma-1}\int \overline{\varrho^\gamma}(t,\cdot)\dd x \leq \frac{1}{\gamma-1}\int \varrho_0^\gamma \dd x. \end{multline*}
Comparing the last equation with (\ref{do_zwartosci}), we get
\[ \frac{1}{\gamma-1}\int \overline{\varrho^\gamma}(t,\cdot)\dd x -\frac{1}{\gamma-1}\int \varrho_0^\gamma \dd x \leq -\int_0^t\int \overline{\varrho^\gamma}\ddiv u \dd x\dd s. \]
We would like to estimate $\displaystyle\int \overline{\varrho^\gamma}(t,\cdot)- \varrho^\gamma(t,\cdot)\dd x$. As we already know that $\varrho$ satisfies the continuity equation in the renormalized sense, we have
\begin{equation} \label{4.4a} \frac{1}{\gamma-1}\int \varrho^\gamma(t,\cdot)\dd x - \frac{1}{\gamma-1}\int \varrho_0^\gamma \dd x = -\int_0^t\int \varrho^\gamma\ddiv u \;\dd x\dd s.
\end{equation}
Therefore
\[ \frac{1}{\gamma-1}\int \big(\overline{\varrho^\gamma}(t,\cdot)-\varrho^\gamma(t,\cdot)\big)\dd x \leq -\int_0^t\int \big(\overline{\varrho^\gamma}-\varrho^\gamma\big)\ddiv u \dd x\dd s. \]
We now use the fact that $\ddiv u+\overline{\lambda(\ddiv u)\ddiv u}-\overline{\varrho^\gamma}\in L^\infty(0,T;BMO)$ and the logarithmic inequality (\ref{log_eq}).
As $\overline{\varrho^\gamma}\geq \varrho^\gamma$ and $\varrho^\gamma,\overline{\varrho^\gamma}\in L^\infty(0,T;L^p)$ for any $p<\infty$, we have 
\[\begin{aligned} -\int_0^t\int \big(\overline{\varrho^\gamma}-\varrho^\gamma\big)\ddiv u \;\dd x\dd s = & -\int_0^t\int \big(\overline{\varrho^\gamma}-\varrho^\gamma\big)(\ddiv u + \overline{\lambda(\ddiv u)\ddiv u}-\overline{\varrho^\gamma})\dd x\dd s \\
&-\int_0^t\int \big(\overline{\varrho^\gamma}-\varrho^\gamma\big)\overline{\varrho^\gamma}\dd x\dd s + \int_0^t\int \big(\overline{\varrho^\gamma}-\varrho^\gamma\big)\overline{\lambda(\ddiv u)\ddiv u}\dd x\dd t \\
\leq & -\int_0^t\int \big(\overline{\varrho^\gamma}-\varrho^\gamma\big)(\ddiv u + \overline{\lambda(\ddiv u)\ddiv u}-\overline{\varrho^\gamma})\dd x\dd s \\
&+ \int_0^t\int \big(\overline{\varrho^\gamma}-\varrho^\gamma\big)\overline{\lambda(\ddiv u)\ddiv u} \dd x\dd s \\
\leq & C\int_0^t\int \big(\overline{\varrho^\gamma}-\varrho^\gamma\big)\dd x \left(\left|\ln\left(\int \big(\overline{\varrho^\gamma}-\varrho^\gamma\big)\dd x\right)\right| + 1\right)\dd s \\
&+ \|\overline{\lambda(\ddiv u)\ddiv u}\|_{L^\infty((0,T)\times\T)}\int_0^t\int \Big(\overline{\varrho^\gamma}-\varrho^\gamma\Big)\dd x\dd s,
\end{aligned}\]
where $C$ depends on $\|\ddiv u+\overline{\lambda(\ddiv u)\ddiv u}-\overline{\varrho^\gamma}\|_{L^\infty(0,T;BMO)}$ and $\|\overline{\varrho^\gamma}-\varrho^\gamma\|_{L^\infty(0,T;L^q)}$ for some $q>2$.
Thus denoting $y(t)=\int\big(\overline{\varrho^\gamma}-\varrho^\gamma\big)\dd x$, we have the inequality
\[ y(t) \leq C\int_0^t y(s)(|\ln y(s)|+1)\dd s \quad \text{with} \quad y(0)=0. \]
As the ordinary differential equation $z'=Cz(|\ln z|+1)$ has a unique solution, we can apply the comparison criterion and in consequence $y\equiv 0$ on the interval $[0,t_*]$ for some $t_*>0$. 
Then we can apply the same analysis on the consecutive intervals of length $t_*$ and in the end $y\equiv 0$ on the whole interval $[0,T]$. From this, as $\overline{\varrho^\gamma}\geq \varrho^\gamma$, it follows that in fact $\overline{\varrho^\gamma}=\varrho^\gamma$. Now taking again the limit in (\ref{approx_energy}) and subtracting (\ref{do_zwartosci}), we get due to fact that $\varrho^\gamma = \overline{\varrho^\gamma}$ and \eqref{4.4a}
\[\begin{aligned} 0 &\geq \int_0^t\int \Big(\overline{|\nabla u|^2}-|\nabla u|^2\Big) \dd x\dd s \geq \lim_{\varepsilon\to 0}\int_0^t\int |\nabla u_\varepsilon-\nabla u|^2\dd x\dd t. \end{aligned}\]
Therefore $\nabla u_\varepsilon \to \nabla u$ in $L^2((0,T)\times\Omega)$ and (for possibly another subsequence) the sequence converges also a.e. In consequence, by virtue of the Lebesgue dominated convergence theorem, 
\[ \overline{\mu_0(|\D u|)\D u} = \mu_0(|\D u|)\D u \]
and
\[ \overline{\lambda(\ddiv u)\ddiv u} = \lambda(\ddiv u)\ddiv u, \]
and thus $(\varrho, u)$ satisfies the weak formulation of the system (\ref{main}).

\begin{rem}
The assumptions on $\mu_0$ and $\lambda$ and the used method allows the situation when the viscosities are singular at $0$, e.g. $\mu_0=\frac{1}{|\D u|}$. Note, however, that in this case, while passing to the limit in the weak formulation, the term
$$
\int  \frac{\D u}{|\D u|}:\nabla \varphi \dd x
$$
is well defined by the values of $\D u$ provided $|\D u|>0$. For $|\D u|=0$ it is just defined as the corresponding limit, which is not necessarily equal to zero if $|\D u|$ is so, cf. e.g. \cite{LasMucha} in a similar context.
\end{rem}

{\bf Acknowledgement:} The work of M. P. was partially supported by the Czech Science Foundation, project No.: 22-01591S, whereas the work of M. S. was supported by National Science Centre grant 2018/29/B/ST1/00339. The results were obtained during the internship of M. S. in Prague.

\bibliographystyle{abbrv}
\bibliography{bibliografia.bib}

\end{document}